\DeclareMathOperator*{\esssup}{ess\,sup}
\newtheorem{theorem}{Theorem}
\newtheorem{lemma}[theorem]{Lemma}
\newtheorem{corollary}[theorem]{Corollary}
\newtheorem{definition}[theorem]{Definition}
\newtheorem{remark}[theorem]{Remark}
\theoremstyle{plain}
\newenvironment{proof1}{\vspace*{0.5cm}\paragraph{{\it Proof of Lemma 33.}}}{\hfill$\square$\vspace*{0.5cm}}
\begin{document}

\title[The Riemann-Liouville fractional integral]{The Riemann-Liouville fractional integral in Bochner-Lebesgue spaces II}


\author[P. M. Carvalho-Neto]{Paulo M. de Carvalho-Neto}
\address[Paulo M. de Carvalho Neto]{Department of Mathematics, Federal University of Santa Catarina, Florian\'{o}polis - SC, Brazil}
\email[]{paulo.carvalho@ufsc.br}
\author[R. Fehlberg J\'{u}nior]{Renato Fehlberg J\'{u}nior}
\address[Renato Fehlberg J\'{u}nior]{Department of Mathematics, Federal University of Esp\'{i}rito Santo, Vit\'{o}ria - ES, Brazil}
\email[Corresponding Author]{renato.fehlberg@ufes.br}


\subjclass[2010]{26A33, 47G10, 46B50}


\keywords{Riemann-Liouville fractional integral, Hardy-Littlewood Theorem, Bochner-Lebesgue spaces, compact operator}


\begin{abstract}
In this work we study the Riemann-Liouville fractional integral of order $\alpha\in(0,1/p)$ as an operator from $L^p(I;X)$ into $L^{q}(I;X)$, with $1\leq q\leq p/(1-p\alpha)$, whether $I=[t_0,t_1]$ or $I=[t_0,\infty)$ and $X$ is a Banach space. Our main result give necessary and sufficient conditions to ensure the compactness of the Riemann-Liouville fractional integral from $L^p(t_0,t_1;X)$ into $L^{q}(t_0,t_1;X)$, when $1\leq q< p/(1-p\alpha)$.
\end{abstract}

\maketitle

\section{Introduction}

In 1928, Hardy and Littlewood proved that Riemann-Liouville fractional integral of order $\alpha\in(0,1/p)$ is a bounded operator from $L^p(I;\mathbb{R})$ into $L^{p/(1-p\alpha)}(I;\mathbb{R})$, whether $I=[t_0,t_1]$ or $I=[t_0,\infty)$. To be more precise, let us enunciate their result bellow (see \cite[Theorem 4]{HaLi1} for details on the proof).
\begin{theorem}\label{theoHL}
Consider $p\in(1,\infty)$, $\alpha\in(0,1/p)$ and assume that $I$ denotes $[t_0,t_1]$ or $[t_0,\infty)$.  If $f\in L^p(I;\mathbb{R})$ we have that $J_{t_0,t}^\alpha f(t)$ belongs to $L^{p/(1-p\alpha)}(I;\mathbb{R})$ and that there exists $K>0$ such that
\begin{equation*}\left[\int_{I}{\left|J^\alpha_{t_0,s}f(s)\right|^{p/(1-p\alpha)}}\,ds\right]^{{(1-p\alpha)/p}}\leq K\left[\int_{I}{|f(s)|^p}\,ds\right]^{1/p}.\end{equation*}
The constant $K$ above only depends on $\alpha$ and $p$.
\end{theorem}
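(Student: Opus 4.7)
The plan is to recognize $J^\alpha_{t_0,\cdot}f$ as a convolution on the real line, obtain a weak-type $(p,q)$ bound by the standard kernel-splitting argument (with $q=p/(1-p\alpha)$), and then pass to the strong-type bound by Marcinkiewicz interpolation. This is the one-dimensional Hardy-Littlewood-Sobolev scheme.

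First I would reduce to a convolution on $\mathbb{R}$. Since the Riemann-Liouville kernel $k(s):=s^{\alpha-1}/\Gamma(\alpha)\chi_{(0,\infty)}(s)$ is nonnegative, it suffices to consider $f\ge 0$; extending $f$ by zero outside $I$ produces a function in $L^p(\mathbb{R};\mathbb{R})$ of the same norm, and $J^\alpha_{t_0,t}f(t)=(k*f)(t)$ whenever $t\in I$. Thus the claim reduces to proving $\|k*f\|_{L^q(\mathbb{R})}\le K\|f\|_{L^p(\mathbb{R})}$ and then restricting to $I$, so the argument is indifferent to whether $I$ is bounded or unbounded.

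Next I would establish the weak-type $(p,q)$ estimate. Fix $\lambda>0$ and split $k=k^{(1)}+k^{(2)}$ with $k^{(1)}=k\chi_{(0,R]}$, for $R=R(\lambda,\|f\|_p)$ to be chosen. Direct computation gives $\|k^{(1)}\|_1\le c_1R^\alpha$; since $\alpha<1/p$ one also has $k^{(2)}\in L^{p'}$ with $p'=p/(p-1)$ and $\|k^{(2)}\|_{p'}\le c_2R^{\alpha-1/p}$. H\"older yields the pointwise bound $\|k^{(2)}*f\|_\infty\le c_2R^{\alpha-1/p}\|f\|_p$, and I calibrate $R$ so that the right-hand side equals $\lambda/2$. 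Minkowski's integral inequality gives $\|k^{(1)}*f\|_p\le c_1R^\alpha\|f\|_p$. Because $\{|k*f|>\lambda\}\subset\{|k^{(1)}*f|>\lambda/2\}$, Chebyshev combined with the choice of $R$ produces
\[|\{t\in\mathbb{R}:|(k*f)(t)|>\lambda\}|\le C\left(\frac{\|f\|_p}{\lambda}\right)^{q},\]
with $C=C(\alpha,p)$, once one checks that the $R$-exponents collapse exactly to $q$.

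Finally, since the weak-type argument works for every $\tilde p\in(1,1/\alpha)$ with $\tilde q=\tilde p/(1-\tilde p\alpha)$, I pick $p_0,p_1$ with $1<p_0<p<p_1<1/\alpha$ and apply the Marcinkiewicz interpolation theorem to the two corresponding weak-type bounds; since these exponents lie on the common line $1/\tilde q=1/\tilde p-\alpha$, the interpolation at $(p,q)$ is consistent and delivers the strong-type inequality $\|k*f\|_{L^q(\mathbb{R})}\le K\|f\|_{L^p(\mathbb{R})}$ with $K=K(\alpha,p)$. The main obstacle is the calibration of $R$ in the weak-type step: the exponents of $R$ coming from $\|k^{(1)}\|_1$ and $\|k^{(2)}\|_{p'}$ must combine exactly to the critical Sobolev exponent $q$, so the computation admits no slack; by contrast the reduction to a convolution on the line and the Marcinkiewicz step are routine.
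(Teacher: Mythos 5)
Your proposal is correct and follows essentially the same route as the paper: the paper also proves a weak-type $\big(p,p/(1-p\alpha)\big)$ bound by splitting the kernel $g_\alpha$ into a truncated piece in $L^1$ and a tail in $L^{p/(p-1)}$, calibrating the truncation parameter so the tail convolution stays below $r/2$, and then applying Chebyshev and Marcinkiewicz interpolation with exponents $p_1<p<p_2<1/\alpha$ on the line $1/q=1/p-\alpha$. The calibration you flag as the delicate step is exactly the computation carried out in the paper's weak-type theorem, and it does close with the critical exponent $p/(1-p\alpha)$.
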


This theorem was one of the first results in the literature that helped us to better understand the behavior of the Riemann-Liouville fractional integral. Of course that this study is much more profound, since it gave birth to the classical Hardy-Littlewood-Sobolev inequality, which is broadly used to study Partial Differential Equations.

There were other researchers that gave some new proofs to Theorem \ref{theoHL}. We may bring light to O'Neil in \cite{ONe1} as a very interesting example. There he discuss some generalizations of H\"{o}lder's inequality and Young's Inequality for convolutions using Young's functions and the Orlicz spaces. The result from which we can derive Theorem \ref{theoHL} is \cite[Theorem 4.7]{ONe1}.

Nevertheless, in both papers (Hardy-Littlewood and O'Neil), the proof of Theorem \ref{theoHL} is complex. In fact, the absence of a more simple proof in the literature (as far as the authors are aware) was what motivated the first stage of the study we have made. Of course, it is worth to point that our main concern here is to deal with more general spaces (Bochner-Lebesgue spaces; see Definition \ref{definitial}), therefore, the study itself is essentially more general.

Our objective here is to give continuity to our later work \cite{CarFe0}. There, we have focused in the discussion of Riemann-Liouville fractional integral, when it is viewed as an operator from a Bochner-Lebesgue space into itself, and discussed some properties associated to it; like boundedness, compactness (the main topic of the work) and estimates of its norm. Let us recall a small improvement of Theorem \ref{theoHL} bellow.

\begin{theorem}[{\cite[Theorem 11]{CarFe0}}]\label{minkowskiseq} Let $\alpha>0$, $1\leq p \leq \infty$ and $f\in L^p(t_0,t_1;X)$. Then $J_{t_0,t}^\alpha f(t)$ is Bochner integrable and belongs to $L^p(t_0,t_1;X)$. Furthermore, it holds that
\begin{equation*}\left[\int_{t_0}^{t_1}{\left\|J_{t_0,t}^\alpha f(t)\right\|^p_X}\,dt\right]^{1/p}\leq \left[\dfrac{(t_1-t_0)^\alpha}{\Gamma(\alpha+1)}\right] \|f\|_{L^p(t_0,t_1;X)}.\end{equation*}
In other words, $J_{t_0,t}^\alpha$ is a bounded operator from $L^p(t_0,t_1;X)$ into itself.
\end{theorem}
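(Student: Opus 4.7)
The Riemann-Liouville fractional integral is a convolution-type operator,
\begin{equation*}
J_{t_0,t}^\alpha f(t)=\frac{1}{\Gamma(\alpha)}\int_{t_0}^{t}(t-s)^{\alpha-1}f(s)\,ds,
\end{equation*}
so the natural tool is Minkowski's integral inequality (hence the label of the statement). My plan is to first handle the measurability/Bochner-integrability question on the triangle $T=\{(t,s):t_0\leq s\leq t\leq t_1\}$, then obtain the $L^p$ estimate by Minkowski for $1\leq p<\infty$, and finally treat $p=\infty$ separately as a trivial case.

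\emph{Step 1: Bochner integrability on the triangle.} I would argue that $(t,s)\mapsto (t-s)^{\alpha-1}f(s)\chi_{T}(t,s)$ is strongly measurable on $[t_0,t_1]^2$ as a product of a scalar measurable function and a strongly measurable $X$-valued function. A quick Tonelli computation with the scalar integrand $(t-s)^{\alpha-1}\|f(s)\|_X$ (finite by the $L^p$ assumption and the integrability of $u^{\alpha-1}$ near $0$ after applying H\"older if needed) shows it is absolutely Bochner integrable. Consequently, for almost every $t\in[t_0,t_1]$, the slice $s\mapsto (t-s)^{\alpha-1}f(s)$ is Bochner integrable on $[t_0,t]$, so $J_{t_0,t}^\alpha f(t)$ is well defined a.e.\ and is itself strongly measurable in $t$.

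\emph{Step 2: The $L^p$ estimate for $1\leq p<\infty$.} After the change of variable $u=t-s$ I rewrite
\begin{equation*}
J_{t_0,t}^\alpha f(t)=\frac{1}{\Gamma(\alpha)}\int_{0}^{t_1-t_0}u^{\alpha-1}f(t-u)\chi_{[t_0+u,t_1]}(t)\,du.
\end{equation*}
Taking $\|\cdot\|_X$, applying Minkowski's integral inequality in the Bochner setting (which follows by duality or by approximation of $f$ by simple functions), and then moving the $L^p_t$-norm inside the $u$-integral, I arrive at
\begin{equation*}
\left[\int_{t_0}^{t_1}\|J_{t_0,t}^\alpha f(t)\|_X^p\,dt\right]^{1/p}\leq \frac{1}{\Gamma(\alpha)}\int_{0}^{t_1-t_0}u^{\alpha-1}\left[\int_{t_0+u}^{t_1}\|f(t-u)\|_X^p\,dt\right]^{1/p}\,du.
\end{equation*}
The inner integral, by a translation, is bounded by $\|f\|_{L^p(t_0,t_1;X)}$, and then $\int_0^{t_1-t_0}u^{\alpha-1}\,du=(t_1-t_0)^\alpha/\alpha$ produces exactly the claimed constant $(t_1-t_0)^\alpha/\Gamma(\alpha+1)$.

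\emph{Step 3: The case $p=\infty$.} Here one pulls $\|f\|_{L^\infty}$ out of the kernel integral and integrates $(t-s)^{\alpha-1}$ on $[t_0,t]$ directly to get $(t-t_0)^\alpha/\Gamma(\alpha+1)\leq(t_1-t_0)^\alpha/\Gamma(\alpha+1)$.

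The only genuine technical obstacle is justifying Minkowski's inequality for Bochner integrals; this is standard but requires care, and if one prefers to avoid it, the same bound can be obtained by dualizing against $L^{p'}(t_0,t_1;X^*)$ (for $p<\infty$) using the scalar Minkowski inequality on the bilinear pairing. All remaining steps are routine.
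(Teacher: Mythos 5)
Your proof is correct and takes essentially the same route as the source of this statement: the paper only quotes the result from \cite{CarFe0}, where it is proved precisely via Minkowski's integral inequality applied to the convolution form of $J_{t_0,t}^\alpha$, exactly as you do. One minor simplification: once you take $\|\cdot\|_X$ inside the integral, only the scalar Minkowski inequality for the function $(t,u)\mapsto u^{\alpha-1}\|f(t-u)\|_X$ is needed, so the ``Bochner-valued Minkowski'' issue you flag (and the duality workaround) can be avoided entirely.
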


It is important to note that Theorem \ref{minkowskiseq} cannot be proved in the context of unbounded intervals. In fact, the Riemann-Liouville fractional integral cannot even define a linear operator from $L^p(t_0,\infty;X)$ into itself (see \cite[Theorem 32]{CarFe0} for details).

In the same work, the authors also gave a necessary and sufficient condition for the compactness of $J_{t_0,t}^\alpha$ as an operator from $L^p(t_0,t_1;X)$ into itself, which was one of the main results proved there.

\begin{theorem}[{\cite[Theorems 15 and 19]{CarFe0}}]\label{compactrieman} Let $p\in[1,\infty]$ and $\alpha>0$. The bounded operator
$J_{t_0,t}^\alpha:L^p(t_0,t_1;X)\rightarrow L^p(t_0,t_1;X)$
is compact if, and only if, for any bounded set $F\subset L^p(t_0,t_1;X)$ it holds that
\begin{equation*}\left|\begin{array}{l}\left\{J_{t_0,t_1^*}^{1+\alpha} f(t_1^*)-J_{t_0,t_0^*}^{1+\alpha} f(t_0^*)\,:\,f\in F\right\}\textrm{ is relatively compact in } X,\vspace*{0.2cm}\\
\textrm{for every }t_0<t_0^*<t_1^*<t_1.\end{array}\right.\end{equation*}
\end{theorem}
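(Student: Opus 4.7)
The plan is to reformulate the hypothesis in the more familiar language of ``relative compactness of mean values'' and then apply a Fréchet--Kolmogorov/Simon-type compactness criterion for vector-valued $L^p$-spaces. The bridge between the two formulations is the identity
\[
J_{t_0,t_1^*}^{1+\alpha} f(t_1^*) - J_{t_0,t_0^*}^{1+\alpha} f(t_0^*) \;=\; \int_{t_0^*}^{t_1^*} J_{t_0,s}^\alpha f(s)\, ds,
\]
which follows from the semigroup property $J^{1+\alpha}=J^1\circ J^\alpha$ (equivalently, from Fubini's theorem applied to the Abel kernel). Thus the hypothesis can be rephrased as saying that, for every bounded $F\subset L^p(t_0,t_1;X)$ and every subinterval $(t_0^*,t_1^*)\subset(t_0,t_1)$, the set $\{\int_{t_0^*}^{t_1^*}J_{t_0,s}^\alpha f(s)\,ds:\,f\in F\}$ is relatively compact in $X$.

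For the necessity, assume $J_{t_0,t}^\alpha$ is compact and fix $t_0<t_0^*<t_1^*<t_1$. For any bounded $F\subset L^p(t_0,t_1;X)$, the image $J^\alpha(F)$ is relatively compact in $L^p(t_0,t_1;X)$. The linear map $T\colon L^p(t_0,t_1;X)\to X$ defined by $T(g):=\int_{t_0^*}^{t_1^*}g(s)\,ds$ is continuous (by Hölder's inequality), and hence preserves relative compactness. Applying $T$ to $J^\alpha(F)$ and invoking the identity above delivers the stated conclusion.

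For the sufficiency, I would verify the hypotheses of Simon's criterion on $J^\alpha(F)$ for a bounded $F\subset L^p(t_0,t_1;X)$. Theorem~\ref{minkowskiseq} provides uniform $L^p$-boundedness. The relative compactness of the mean values in $X$ is precisely the reformulated hypothesis. The remaining ingredient is $L^p$-equicontinuity in the mean,
\[
\lim_{h\to 0^+}\sup_{f\in F}\int_{t_0}^{t_1-h}\bigl\|J_{t_0,t+h}^\alpha f(t+h)-J_{t_0,t}^\alpha f(t)\bigr\|_X^p\,dt \;=\; 0.
\]

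The main obstacle will be establishing this last uniform estimate. My approach is to split the difference $J_{t_0,t+h}^\alpha f(t+h)-J_{t_0,t}^\alpha f(t)$ into a kernel-difference piece $\int_{t_0}^t\bigl[(t+h-s)^{\alpha-1}-(t-s)^{\alpha-1}\bigr]f(s)\,ds/\Gamma(\alpha)$ and a boundary piece $J_{t,t+h}^\alpha f(t+h)$. The boundary piece is controlled by applying Theorem~\ref{minkowskiseq} (or Young's convolution inequality) on the short interval $[t,t+h]$, yielding an $h^\alpha$-type bound uniform in $f\in F$. The kernel-difference piece is handled by Minkowski's integral inequality, which reduces the estimate to the integrability of $|(u+h)^{\alpha-1}-u^{\alpha-1}|$ and to the $L^1$-continuity of translation, giving uniform smallness as $h\to 0^+$. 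With the three conditions in place, Simon's criterion delivers compactness of $J^\alpha(F)$ in $L^p(t_0,t_1;X)$; the extremal case $p=\infty$ is treated by a separate Arzelà--Ascoli argument, the equicontinuity being supplied by the same split together with Theorem~\ref{minkowskiseq} on small subintervals.
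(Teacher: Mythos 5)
Your proposal is correct and follows essentially the same strategy as the paper's compactness arguments: necessity by composing the compact operator with the bounded map $g\mapsto\int_{t_0^*}^{t_1^*}g(s)\,ds$ together with the Fubini identity (exactly the argument of Lemma \ref{compactlemma}), and sufficiency by checking Simon's criterion after splitting the translate difference into a kernel-difference term and a boundary term, which for target exponent equal to $p$ can indeed be closed with plain Young-type estimates (the paper's Theorem \ref{compactrieman00} needs the fractional mean value theorem and interpolation only because there $q>p$). The one under-detailed point is $p=\infty$, which Theorem \ref{simon} does not cover: your Arzel\`a--Ascoli route works, but besides equicontinuity it also requires pointwise relative compactness of $\left\{J^{\alpha}_{t_0,t^*}f(t^*):f\in F\right\}$ in $X$, which should be deduced from the hypothesis by approximating $J^{\alpha}_{t_0,t^*}f(t^*)$ uniformly in $f$ by the averages $\frac{1}{\delta}\int_{t^*}^{t^*+\delta}J^{\alpha}_{t_0,s}f(s)\,ds$.
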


From this results, a natural question that arises is about the existence of a greatest exponent $q\in[1,\infty]$ such that
$$J_{t_0,t}^\alpha:L^p(I;X)\rightarrow L^q(I;X),$$
is a well-defined linear operator. Moreover, we may argue if it is bounded and compact, regardless of whether $I=[t_0,t_1]$ or $I=[t_0,\infty)$, like it was done by Hardy and Littlewood when $X=\mathbb{R}$.

Even though variations of the operator induced by the Riemann-Liouville fractional integral have been extensively studied in the literature (when $X=\mathbb{R}$), there are no evidence of this results in the case of Bochner-Lebesgue spaces. In order to recall some of these classical works, we may cite Stepanov in \cite{Ste1} where he addressed the boundedness and compactness of operators $K:L^p(I;\mathbb{R})\rightarrow L^q(I;\mathbb{R})$, where
$$Kf(t)=v(t)\int_0^tK(t,s)u(s)f(s)ds,$$
with $v(t)$ and $u(t)$ locally integrable functions and $K(t,s)$ satisfying some particular conditions, or even Prokhorov in \cite{Pro1} where  he addressed the same question, however to the operators $T_\alpha:L^p((0,\infty);\mathbb{R})\rightarrow L^q((0,\infty);\mathbb{R})$, which were given by
$$T_\alpha f(t)=\dfrac{v(t)}{t^\alpha}\int_0^t(t-s)^{\alpha-1}f(s)ds,$$
where $v(t)$ is also locally integrable. As it was observed above by these authors and several others, the problem of compactness of these integral operators is much more complex and trickier than it appears to be. This was what motivated our studies and the discussion we promote with this work.

To be more accurate, in Section 2 we begin by using Interpolation Theory in order to give a shorter proof to Hardy-Littlewood's classical result on fractional integrals, however here in the general context of Bochner-Lebesgue functions. We end this section discussing the range of Riemann-Liouville fractional integral operator.

Section 3 is dedicated to address our main results. We begin introducing Riemann-Liouville and Caputo fractional derivatives, some of their associated classical results, an interpolation theorem and the classical Simon's characterization of compact subsets of $L^p(t_0,t_1;X)$. Then we use those results to prove that Riemann-Liouville fractional integral operator from $L^p(t_0,t_1;X)$ into $L^{q}(t_0,t_1;X)$, with $q< p/(1-p\alpha)$, is a compact operator.

\section{On the Continuity of the Riemann-Liouville Fractional Integral in Bochner-Lebesgue spaces}

We already know that Riemann-Liouville fractional integral of order $\alpha>0$ defines a bounded operator from $L^p(t_0,t_1;X)$ into itself. However, in this section we improve this result. In fact, we want to verify that if $p\in(1,\infty)$ and $\alpha\in(0,1/p)$, then $1/(1-p\alpha)$ is the greatest exponent such that
$$J_{t_0,t}^\alpha:L^p(I;X)\rightarrow L^{1/(1-p\alpha)}(I;X),$$
is well defined and, in fact, defines a bounded operator, regardless of whether $I=[t_0,t_1]$ or $I=[t_0,\infty)$. Here our objective is not only adapt this result to general Bochner-Lebesgue spaces, but also give a more adequate proof.

First, let us fix some concepts (for more details on fractional calculus and the theory of Bochner integrable functions, we may refer to \cite{ArBaHiNe1,CarFe0,Mik1}).

\begin{definition}\label{definitial} If $p\in[1,\infty]$, we represent the set of all Bochner measurable functions $f:I\rightarrow X$, for which $\|f\|_X\in L^{p}(I;\mathbb{R})$, by the symbol $L^{p}(I;{X})$. Moreover, $L^{p}(I;{X})$ is a Banach space when considered with the norm
$$\|f\|_{L^p(I;X)}:=\left\{\begin{array}{ll}\bigg[\displaystyle\int_I{\|f(s)\|^p_X}\,ds\bigg]^{1/p},&\textrm{ if }p\in[1,\infty),\vspace*{0.3cm}\\
\esssup_{s\in I}\|f(s)\|_X,&\textrm{ if }p=\infty.\end{array}\right.$$
When $I=[t_0,t_1]$  (or $I=[t_0,\infty)$) almost everywhere, we shall use the notation $L^p(t_0,t_1;X)$ (or $L^p(t_0,\infty;X)$) besides $L^p(I;X)$. \vspace*{0.2cm}
\end{definition}

With the Bochner-Lebesgue spaces introduced above, we may present the classical Riemann-Liouville fractional integral.

\begin{definition} Assume that $\alpha\in(0,\infty)$, $I=[t_0,t_1]$ or $I=[t_0,\infty)$ and consider function $f:I\rightarrow{X}$. The Riemann-Liouville (RL for short) fractional integral of order $\alpha$ at $t_0$ of function $f$ is defined by
\begin{equation}\label{fracinit} J_{t_0,t}^\alpha f(t):=\dfrac{1}{\Gamma(\alpha)}\displaystyle\int_{t_0}^{t}{(t-s)^{\alpha-1}f(s)}\,ds,\end{equation}
for every $t\in I$ such that integral \eqref{fracinit} exists. Above $\Gamma$ denotes the classical Euler's gamma function.
\end{definition}

\begin{remark}\label{remark5}
%
%
%
For $\alpha>0$, consider function $g_\alpha:\mathbb{R}\rightarrow\mathbb{R}$ given by
\begin{equation*}g_\alpha(r):=\left\{\begin{array}{cl} r^{\alpha-1}/\Gamma(\alpha),&r>0,\vspace{0.2cm}\\
0,&r\leq0.\end{array}\right.\end{equation*}
If $f\in L^1(t_0,t_1;X)$, by defining $f_{t_0}:\mathbb{R}\rightarrow X$ as equal to $f$ in $[t_0,t_1]$ and equal to zero otherwise, we deduce that
$$J_{t_0,t}^\alpha f(t)=\big[\,g_{\alpha}*{f_{t_0}}\,\big](t),\quad \textrm{for almost every }t\in[t_0,t_1],$$
where the symbol $*$ stands for the convolution. We omit the subscript $t_0$ in the above notation when it does not lead to any confusion. For more details on convolutions of Bochner integrable functions see \cite[Chapter XIV]{Mik1}.
%
%
\end{remark}

\subsection{Critical Case}

In order to prove the continuity of the RL fractional integral from $L^p(I;X)$ into $L^{p/(1-p\alpha)}(I;X)$, we find useful to begin this subsection by recalling some theoretical framework about the weak $L^p$ spaces.

\begin{definition}\label{weak1} Consider $f:I\rightarrow\mathbb{R}$ a Lebesgue measurable function. We define the distribution function of $f(t)$, i.e., $\lambda_f:(0,\infty)\rightarrow[0,\infty]$, by
$$\lambda_f(r)=\mu\{t\in I:|f(t)|>r\}.$$
where $\mu$ denotes the Lebesgue measure. Now, if $p\in[1,\infty)$ we define the weak $L^p(I;\mathbb{R})$, or just $L_w^p(I;\mathbb{R})$, to be the set of all Lebesgue measurable functions $f:I\rightarrow\mathbb{R}$ (where functions which agree almost everywhere are identified) such that
\begin{equation*} [f]_{L_w^p(I;\mathbb{R})}:={\Big\{\sup_{r>0}\big[r^p\lambda_f(r)\big]\Big\}^{1/p}}<\infty.\end{equation*}

\end{definition}

\begin{remark}\label{weak2} It is not our intention to discuss the $L_w^p(I;\mathbb{R})$ spaces, for  $p\in[1,\infty)$, as a whole theory in this paper, since such discussion can be found, for instance, in \cite[Chapter 1]{Gra1}. Here, we just note that function $[\,.\,]_{L_w^p(I;\mathbb{R})}:L^p_w(I;\mathbb{R})\rightarrow[0,\infty)$ defines a quasi-norm in $L^p_w(I;\mathbb{R})$, since instead of the triangular inequality it holds that
$$[f+g]_{L_w^p(I;\mathbb{R})}\leq 2^{1/p}\Big([f]_{L^p_w(I;\mathbb{R})}+[g]_{L^p_w(I;\mathbb{R})}\Big).\vspace*{0.2cm}$$
\end{remark}

Below we introduce a standard characterization of some linear operators from $L^p(I;\mathbb{R})$ into $L_w^q(I;\mathbb{R})$; more precisely, the weak-type $(p,q)_I$.
\begin{definition} Suppose that $p\in[1,\infty]$ and $q\in[1,\infty)$. We say that a linear operator $T:L^p(I;\mathbb{R})\rightarrow L_w^q(I;\mathbb{R})$ is of weak-type $(p,q)_I$ if there exists $c=c(p,q)$ such that
$$ [Tf]_{L_w^q(I;\mathbb{R})}\leq c\|f\|_{L^p(I;\mathbb{R})},\quad \forall f\in L^p(I;\mathbb{R}).$$
Also, we say that a linear operator $T:L^p(I;\mathbb{R})\rightarrow L^\infty(I;\mathbb{R})$ is of weak-type $(p,\infty)_I$ if there exists $c=c(p,\infty)$ such that
$$ \|Tf\|_{L^\infty(I;\mathbb{R})}\leq c\|f\|_{L^p(I;\mathbb{R})},\quad \forall f\in L^p(I;\mathbb{R}).$$

In either case, we define the weak-type $(p,q)_I$ norm of the operator $T$ by
$$\|T\|_{(p,q)_I}:=\left\{\begin{array}{ll}\sup\left\{\dfrac{[Tf]_{L_w^q(I;\mathbb{R})}}{\|f\|_{L^p(I;\mathbb{R})}}:{f\in L^p(I;\mathbb{R})\setminus\{0\}}\right\},&\textrm{if }1\leq q<\infty,\vspace*{0.5cm}\\
\sup\left\{\dfrac{\|Tf\|_{L^\infty(I;\mathbb{R})}}{\|f\|_{L^p(I;\mathbb{R})}}:{f\in L^p(I;\mathbb{R})\setminus\{0\}}\right\},&\textrm{if }q=\infty.
\end{array}\right.$$
\end{definition}

\begin{remark} In general, the above notion is formulated in terms of sublinear operators. However, since this work just address linear operators, we avoid this generalization.
\end{remark}

The theory introduced above allow us to present the classical Chebyshev's inequality (see \cite[Theorem 1.1.4]{Gra1} for details on the proof).

\begin{theorem}[Chebyshev's Inequality]\label{chebychev} Let $p\in[1,\infty)$ and $f\in L^p(I;\mathbb{R})$. Then
$$\lambda_f(r)\leq r^{-p}\int_{I}{|f(s)|^p}\,ds,\qquad\forall r>0.$$

\end{theorem}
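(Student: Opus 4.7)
The plan is to exploit the pointwise inequality that, on the level set $E_r := \{t \in I : |f(t)| > r\}$, we trivially have $|f(s)|^p > r^p$. This furnishes the entire argument: one monotonicity step followed by a pointwise lower bound on $E_r$, yielding $r^p \lambda_f(r) \le \int_I |f(s)|^p\,ds$ after dividing through.

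More precisely, I would first fix $r > 0$, set $E_r := \{t \in I : |f(t)| > r\}$, and note that $E_r$ is Lebesgue measurable because $f$ is, so that $\lambda_f(r) = \mu(E_r)$ makes sense in $[0,\infty]$. Then I would write
\begin{equation*}
\int_I |f(s)|^p\,ds \;\ge\; \int_{E_r} |f(s)|^p\,ds \;\ge\; \int_{E_r} r^p\,ds \;=\; r^p\,\mu(E_r) \;=\; r^p\,\lambda_f(r),
\end{equation*}
where the first inequality is monotonicity of the Lebesgue integral of the nonnegative integrand $|f|^p$ and the second uses the defining property $|f(s)| > r$ on $E_r$. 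Dividing by $r^p > 0$ gives the claimed bound.

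Since $f \in L^p(I;\mathbb{R})$ guarantees that the right-hand side is finite, the argument also implicitly shows $\lambda_f(r) < \infty$ for every $r > 0$, which is a useful sanity check consistent with the later quasi-norm bound from Remark~\ref{weak2}. There is no real obstacle here: the proof is essentially a one-line application of Markov's trick, and the only things to be careful about are the measurability of $E_r$ (immediate from Lebesgue measurability of $f$) and the fact that $r > 0$ so that division is legitimate. No appeal to the earlier theorems in the excerpt is needed.
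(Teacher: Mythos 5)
Your proof is correct and is exactly the standard Chebyshev/Markov argument; the paper itself gives no proof but defers to the classical one in Grafakos, which proceeds by the same restriction of the integral to the level set $E_r$ and the pointwise bound $|f|^p > r^p$ there. Nothing further is needed.
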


\begin{remark}\label{inclusoeslp} (i) As a consequence of Chebychev's Inequality, if $p\in[1,\infty)$, it holds that $L^p(I;\mathbb{R})$ is a proper vectorial subspace of $L^p_w(I;\mathbb{R})$ and
$$[f]_{L^p_w(I;\mathbb{R})}\leq\|f\|_{L^p(I;\mathbb{R})},$$
for every $f\in L^p(I;\mathbb{R})$.

$(ii)$ Another three important relations between the $L^p$ spaces and the weak-$L^p$ spaces are the following: 
\begin{itemize}
\item[(a)] If $1\leq p< q<\infty$, it holds that $L_w^q(t_0,t_1;\mathbb{R})\subset L^p(t_0,t_1;\mathbb{R})$. Moreover, we have that
$$\|f\|_{L^p(t_0,t_1;\mathbb{R})}\leq\left(\dfrac{q}{q-p}\right)^{1/p}(t_1-t_0)^{(q-p)/(pq)}[f]_{L^q_w(t_0,t_1;\mathbb{R})},$$
for every $f\in L^q_w(t_0,t_1;\mathbb{R})$. See \cite[Chapter 1]{Gra1} for details.\vspace*{0.2cm}
\item[(b)] If $1\leq p< q\leq\infty$, it holds that $L^q(t_0,t_1;\mathbb{R})\subset L^p_w(t_0,t_1;\mathbb{R})$. Moreover, we have that
$$[f]_{L^p_w(t_0,t_1;\mathbb{R})}\leq\left\{\begin{array}{ll}(t_1-t_0)^{(q-p)/(pq)}\|f\|_{L^q(t_0,t_1;\mathbb{R})},&\textrm{if }q<\infty,\vspace*{0.2cm}\\
(t_1-t_0)^{1/p}\|f\|_{L^q(t_0,t_1;\mathbb{R})},&\textrm{if }q=\infty.\end{array}\right.$$
for every $f\in L^q(t_0,t_1;\mathbb{R})$. This last inequality follows from item $(i)$ of Remark \ref{inclusoeslp} and H\"{o}lder's inequality. \vspace*{0.2cm}
\item[(c)] If $1\leq p< q<\infty$, it holds that $L_w^q(t_0,t_1;\mathbb{R})\subset L^p_w(t_0,t_1;\mathbb{R})$. Moreover, we have that
$$[f]_{L^p_w(t_0,t_1;\mathbb{R})}\leq\left(\dfrac{q}{q-p}\right)^{1/p}(t_1-t_0)^{(q-p)/(pq)}[f]_{L^q_w(t_0,t_1;\mathbb{R})},$$
for every $f\in L_w^q(t_0,t_1;\mathbb{R})$. This follows from subitem $(a)$ and item $(i)$ of Remark \ref{inclusoeslp}.
\end{itemize}
\end{remark}

Now we present a core result that is used in the proof of our version of Theorem \ref{theoHL}, when adapted to Bochner-Lebesgue spaces. It worths to emphasize that the ideas used to prove this next result were inspired by Interpolation Theory (see \cite[Theorem 4.18]{BeSh1} for details).

\begin{theorem}\label{theoHLBochlemma}
Consider $p\in[1,\infty)$, $\alpha\in(0,1/p)$ and assume that $I=[t_0,t_1]$ or $I=[t_0,\infty)$. Then RL fractional integral of order $\alpha$ defines an operator of weak-type $\big(p,{p}/{(1-p\alpha)}\big)_I$. More specifically, it holds that
$$[J^\alpha_{t_0,t}f]_{L_w^{p/(1-p\alpha)}(I;\mathbb{R})}\leq K_{\alpha,p}\|f\|_{L^{p}(I;\mathbb{R})},$$
for every $f\in L^{p}(I;\mathbb{R})$, where
\begin{equation}\label{equationauxconst}K_{\alpha,p}=\left\{\begin{array}{ll}\dfrac{2(p-1)^{\alpha(p-1)}}{\alpha^{1-p\alpha}\,\Gamma(\alpha)(1-\alpha p)^{\alpha(p-1)}},&\textrm{ if }p>1,\vspace*{0.3cm}\\
\dfrac{2}{\alpha^{1-\alpha}\,\Gamma(\alpha)},&\textrm{ if }p=1.\end{array}\right.\end{equation}
\end{theorem}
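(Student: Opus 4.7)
The strategy I would follow is a Marcinkiewicz-type argument applied directly to the convolution representation. By Remark \ref{remark5}, write $J^\alpha_{t_0,t}f(t)=(g_\alpha*f_{t_0})(t)$, and for a threshold $\delta>0$ to be chosen at the end, split the kernel as $g_\alpha=g_\alpha^{(1)}+g_\alpha^{(2)}$ with $g_\alpha^{(1)}=g_\alpha\chi_{(0,\delta)}$ and $g_\alpha^{(2)}=g_\alpha\chi_{[\delta,\infty)}$. By Young's convolution inequality, the near-singularity piece obeys
$$\|g_\alpha^{(1)}*f_{t_0}\|_{L^p(I;\mathbb{R})}\leq \|g_\alpha^{(1)}\|_{L^1(\mathbb{R})}\|f\|_{L^p(I;\mathbb{R})}=\frac{\delta^\alpha}{\Gamma(\alpha+1)}\|f\|_{L^p(I;\mathbb{R})}.$$
For the tail piece I would control it in $L^\infty$ by H\"older's inequality: when $p>1$, the hypothesis $\alpha<1/p$ is exactly what makes $g_\alpha^{(2)}\in L^{p'}(\mathbb{R})$ (with $1/p+1/p'=1$), and an explicit integration yields
$$\|g_\alpha^{(2)}\|_{L^{p'}(\mathbb{R})}=\frac{1}{\Gamma(\alpha)}\left(\frac{p-1}{1-\alpha p}\right)^{(p-1)/p}\delta^{-(1-\alpha p)/p},$$
while the case $p=1$ is treated analogously using $\|g_\alpha^{(2)}\|_{L^\infty(\mathbb{R})}=\delta^{\alpha-1}/\Gamma(\alpha)$. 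In either case $\|g_\alpha^{(2)}*f_{t_0}\|_{L^\infty(I;\mathbb{R})}\leq C(\delta)\|f\|_{L^p(I;\mathbb{R})}$ with $C(\delta)$ a negative power of $\delta$.

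Now, fix $r>0$ and choose $\delta=\delta(r)$ so that $C(\delta)\|f\|_{L^p(I;\mathbb{R})}=r/2$; solving gives $\delta$ as an explicit power of $\|f\|_{L^p(I;\mathbb{R})}/r$. The inclusion
$$\bigl\{t\in I:|J_{t_0,t}^\alpha f(t)|>r\bigr\}\subset \bigl\{t:|g_\alpha^{(1)}*f_{t_0}(t)|>r/2\bigr\}\cup \bigl\{t:|g_\alpha^{(2)}*f_{t_0}(t)|>r/2\bigr\}$$
together with the choice of $\delta$ kills the second set entirely, so Chebyshev's inequality (Theorem \ref{chebychev}) applied to the first set gives
$$\mu\bigl\{t\in I:|J_{t_0,t}^\alpha f(t)|>r\bigr\}\leq \Bigl(\tfrac{2}{r}\Bigr)^p\|g_\alpha^{(1)}*f_{t_0}\|_{L^p(I;\mathbb{R})}^p\leq \left(\frac{2\delta^\alpha\|f\|_{L^p(I;\mathbb{R})}}{r\,\Gamma(\alpha+1)}\right)^p.$$
After substituting $\delta$, the total exponent of $\|f\|_{L^p(I;\mathbb{R})}/r$ on the right-hand side equals $p+\alpha p^2/(1-\alpha p)=p/(1-\alpha p)=q$, which is exactly what is needed so that $r^q\mu\{\cdots\}$ becomes independent of $r$.

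Taking the $q$-th root yields the weak-type bound, and the remaining work is purely algebraic: one must carefully collect the powers of $2$, $\alpha$, $\Gamma(\alpha)$ and $(p-1)/(1-\alpha p)$ accumulated from $C(\delta)$ and from $\Gamma(\alpha+1)=\alpha\Gamma(\alpha)$. The identity $\beta+\alpha p=1$ with $\beta=1-\alpha p$ is what makes the exponents of $2$ and of $\Gamma(\alpha)$ collapse to $1$, producing exactly the constant $K_{\alpha,p}$ displayed in \eqref{equationauxconst}, and similarly in the $p=1$ case where $\alpha+\beta=1$. The main obstacle I expect is not conceptual but bookkeeping: tracking these fractional exponents through the substitution and verifying they simplify to the stated closed form; the underlying interpolation-style idea itself is short and transparent.
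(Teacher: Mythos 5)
Your argument is correct and is essentially the paper's own proof: the same split of the kernel $g_\alpha$ at a threshold chosen so that the tail convolution's $L^\infty$ bound (via Young/H\"older, using $\alpha<1/p$) equals $r/2$, followed by Chebyshev on the $L^1$-truncated piece and optimization in the threshold, with the same exponent and constant bookkeeping leading to $K_{\alpha,p}$ in \eqref{equationauxconst}. No substantive differences to report.
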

\begin{proof} The Bochner integrability of $J^\alpha_{t_0,t}f(t)$ follows from Theorem \ref{minkowskiseq}. Since for $f\equiv0$ the inequality is trivial, let us assume that $f\in L^p(I;\mathbb{R})\setminus\{0\}$. From  Remark \ref{remark5}, we already know that $J_{t_0,t}^\alpha f(t)=\big[\,g_{\alpha}*{f_{t_0}}\,\big](t)$, for almost every $t\in I.$ Define, for each $\zeta>0$, functions $g_{\alpha,1,\zeta}\,,\,g_{\alpha,\infty,\zeta}:\mathbb{R}\rightarrow\mathbb{R}$, which are given by
$$g_{\alpha,1,\zeta}(t)=\left\{\begin{array}{ll}g_\alpha(t),&\textrm{if }t\in(0,\zeta),\\0,&\textrm{otherwise},\end{array}\right.\qquad\textrm{and}\qquad g_{\alpha,\infty,\zeta}(t)=\left\{\begin{array}{ll}0,&\textrm{if }t\in(0,\zeta),\\g_\alpha(t),&\textrm{otherwise}.\end{array}\right.$$


Thus, since we may write
$$J_{t_0,t}^\alpha f(t)=\big[\,g_{\alpha,1,\zeta}*{f_{t_0}}\,\big](t)+\big[\,g_{\alpha,\infty,\zeta}*{f_{t_0}}\,\big](t),$$
for almost every $t\in I$, we deduce that for each $r>0$
\begin{multline}\label{inclusionweak}\Big\{t\in I:\big|J_{t_0,t}^\alpha f(t)\big|>r\Big\}\subset\Big\{t\in I:\big|[g_{\alpha,1,\zeta}*f_{t_0}](t)\big|>r/2\Big\}\\\bigcup \Big\{t\in I:\big|[g_{\alpha,\infty,\zeta}*f_{t_0}](t)\big|>r/2\Big\}.\end{multline}
\begin{itemize}
\item[(i)] \underline{For the case $p\in(1,\infty)$.}\vspace*{0.4cm}\\
Since it holds that
$$\big|\big[g_{\alpha,\infty,\zeta}*{f_{t_0}}\big](t)\big|\leq\int_{-\infty}^\infty{\big|g_{\alpha,\infty,\zeta}(t-s){f_{t_0}(s)}\big|}\,ds,$$
Young's Inequality for convolutions ensures the estimate
\begin{multline*}\hspace*{1.2cm}\big|\big[g_{\alpha,\infty,\zeta}*{f_{t_0}}\big](t)\big|\leq \big\|g_{\alpha,\infty,\zeta}*{f_{t_0}}\big\|_{L^{\infty}(\mathbb{R};\mathbb{R})}\leq \|g_{\alpha,\infty,\zeta}\|_{L^{p/(p-1)}(\mathbb{R};\mathbb{R})}\|{f_{t_0}}\|_{L^p(\mathbb{R};\mathbb{R})}
\\= \left[\dfrac{\zeta^{\alpha-(1/p)}}{\Gamma(\alpha)\big[(1-\alpha p)/(p-1)\big]^{(p-1)/p}}\right]\|{f}\|_{L^p(I;\mathbb{R})},\end{multline*}
for almost every $t\in I$. Thus, if we choose $\zeta_r>0$ (in fact, there is only one) such that
\begin{equation}\label{espcons}\left[\dfrac{\zeta_r^{\alpha-(1/p)}}{\Gamma(\alpha)\big[(1-\alpha p)/(p-1)\big]^{(p-1)/p}}\right]\|{f}\|_{L^p(I;\mathbb{R})}=r/2,\end{equation}
we would obtain that
\begin{equation}\label{inclusionweak1}\mu\Big(\big\{t\in I:\big|[g_{\alpha,\infty,\zeta_r}*f_{t_0}](t)\big|>r/2\big\}\Big)=0,\end{equation}
where $\mu$ above is the Lebesgue measure. Thus, from \eqref{inclusionweak} we deduce that
\begin{multline}\label{inclusionweak2}\hspace*{1.2cm}\big\{t\in I:\big|J_{t_0,t}^\alpha f(t)\big|>r\big\}\setminus \big\{t\in I:\big|[g_{\alpha,\infty,\zeta_r}*f_{t_0}](t)\big|>r/2\big\}\\\subset\big\{t\in I:\big|[g_{\alpha,1,\zeta_r}*f_{t_0}](t)\big|>r/2\big\},\end{multline}
and consequently from \eqref{inclusionweak1} and \eqref{inclusionweak2} we obtain
\begin{multline}\label{quasefim00}\hspace*{1.2cm}\mu\Big(\big\{t\in I:\big|J_{t_0,t}^\alpha f(t)\big|>r\big\}\Big)\leq\mu\Big(\big\{t\in I:\big|[g_{\alpha,1,\zeta_r}*f_{t_0}](t)\big|>r/2\big\}\Big).\end{multline}

Finally, since Young's inequality to convolutions ensures that
\begin{equation}\label{quasefim01}\hspace*{0.9cm}\|g_{\alpha,1,\zeta}*{f_{t_0}}\|_{L^p(\mathbb{R};\mathbb{R})}\leq \|g_{\alpha,1,\zeta}\|_{L^1(\mathbb{R};\mathbb{R})}\|{f_{t_0}}\|_{L^p(\mathbb{R};\mathbb{R})}=\left[\dfrac{\zeta^\alpha}{\Gamma(\alpha+1)}\right]\|f\|_{L^p(I;\mathbb{R})},\end{equation}
for any $\zeta>0$, by applying Theorem \ref{chebychev} in the right side of \eqref{quasefim00} and by considering \eqref{quasefim01}, we achieve the estimate
$$\hspace*{0.5cm}\mu\Big(\big\{t\in I:\big|J_{t_0,t}^\alpha f(t)\big|>r\big\}\Big)\leq (r/2)^{-p}\left[\dfrac{\zeta_r^\alpha}{\Gamma(\alpha+1)}\right]^p\|f\|^p_{L^p(I;\mathbb{R})},\quad\forall r>0.$$
But then, taking into account \eqref{espcons}, we deduce that
\begin{multline*}\hspace*{1.2cm}\mu\Big(\big\{t\in I:\big|J_{t_0,t}^\alpha f(t)\big|>r\big\}\Big)\\\leq \left[\dfrac{2\,\|f\|_{L^p(I;\mathbb{R})}}{r\alpha^{1-p\alpha}\,\Gamma(\alpha)\big[(1-\alpha p)/(p-1)\big]^{\alpha(p-1)}}\right]^{p/(1-p\alpha)},\quad\forall r>0.\end{multline*}

Hence, we deduce the estimate
$$\hspace*{1.1cm}\left\{\sup_{r>0}\Big[r^{p/(1-p\alpha)}\mu\Big(\big\{t\in I:\big|J_{t_0,t}^\alpha f(t)\big|>r\big\}\Big)\Big]\right\}^{(1-p\alpha)/p}\\\leq K_{\alpha,p}\|f\|_{L^p(I;\mathbb{R})},$$
where
$$K_{\alpha,p}=\dfrac{2(p-1)^{\alpha(p-1)}}{\alpha^{1-p\alpha}\,\Gamma(\alpha)(1-\alpha p)^{\alpha(p-1)}}.$$
\vspace*{0.4cm}

\item[(ii)] \underline{For the case $p=1$.}\vspace*{0.4cm}\\
We follow the ideas presented above. Let us start by noting that
$$\hspace*{0.8cm}|g_{\alpha,\infty,\zeta}*{f_{t_0}}(t)|\leq \|g_{\alpha,\infty,\zeta}\|_{L^{\infty}(\mathbb{R};\mathbb{R})}\|{f_{t_0}}\|_{L^1(\mathbb{R};\mathbb{R})}
= \left[\dfrac{\zeta^{\alpha-1}}{\Gamma(\alpha)}\right]\|{f}\|_{L^1({I};\mathbb{R})},$$
for almost $t\in I$. Hence, if we define $\zeta_r>0$ as a solution of the equality
\begin{equation}\label{aquasefim00121}\left[\dfrac{\zeta_r^{\alpha-1}}{\Gamma(\alpha)}\right]\|{f}\|_{L^1({I};\mathbb{R})}=r/2,\end{equation}
we would obtain that $\big\{t\in I:\big|[g_{\alpha,\infty,\zeta_r}*f_{t_0}](t)\big|>r/2\big\}$ has Lebesgue measure zero. Like before, we deduce that
\begin{equation}\label{aquasefim00}\hspace*{1.1cm}\mu\Big(\big\{t\in I:\big|J_{t_0,t}^\alpha f(t)\big|>r\big\}\Big)\leq\mu\Big(\big\{t\in I:\big|[g_{\alpha,1,\zeta_r}*f_{t_0}](t)\big|>r/2\big\}\Big).\end{equation}
Finally, since Young's inequality to convolutions ensures that
\begin{equation}\label{aquasefim01}\hspace*{0.9cm}\|g_{\alpha,1,\zeta}*{f_{t_0}}\|_{L^1(\mathbb{R};\mathbb{R})}\leq \|g_{\alpha,1,\zeta}\|_{L^1(\mathbb{R};\mathbb{R})}\|{f_{t_0}}\|_{L^1(\mathbb{R};\mathbb{R})}=\left[\dfrac{\zeta^\alpha}{\Gamma(\alpha+1)}\right]\|f\|_{L^1({I};\mathbb{R})},\end{equation}
for any $\zeta>0$, by applying Theorem \ref{chebychev} in the right side of \eqref{aquasefim00} and by considering \eqref{aquasefim01} and \eqref{aquasefim00121}, we achieve the estimate
\begin{multline*}\hspace*{1.2cm} \mu\Big(\big\{t\in I:\big|J_{t_0,t}^\alpha f(t)\big|>r\big\}\Big)\leq (r/2)^{-1}\left[\dfrac{\zeta_r^\alpha}{\Gamma(\alpha+1)}\right]\|f\|_{L^1({I};\mathbb{R})}\\ \leq \left[\dfrac{2\|f\|_{L^1({I};\mathbb{R})}}{r\alpha^{1-\alpha}\Gamma(\alpha)}\right]^{1/(1-\alpha)}.\end{multline*}

Therefore
$$\hspace*{0.8cm}\left\{\sup_{r>0}\Big[r^{1/(1-\alpha)}\mu\Big(\big\{t\in I:\big|J_{t_0,t}^\alpha f(t)\big|>r\big\}\Big)\Big]\right\}^{1-\alpha}\\\leq \left[\dfrac{2}{\alpha^{1-\alpha}\Gamma(\alpha)}\right]\|f\|_{L^1({I};\mathbb{R})}.$$
\end{itemize}
\end{proof}

\begin{remark}\label{theoHLBochlemmarameark} A consequence of Theorem \ref{theoHLBochlemma} is that
$$\|J_{t_0,t}^\alpha\|_{(p,p/(1-p\alpha))_I}\leq K_{\alpha,p},$$
for every $p\in[1,\infty)$ and $\alpha\in(0,1/p)$, whether $I=[t_0,t_1]$ or $I=[t_0,\infty)$. Recall that $K_{\alpha,p}$ is given in \eqref{equationauxconst}.
\end{remark}

\begin{remark}
  For each $p\in[1,\infty)$, we may define the vectorial spaces $L^p_w(I;X)$ (which are quasi-normed vectorial spaces), by following the same ideas used to define the vectorial spaces $L^p_w(I;\mathbb{R})$; see Definition \ref{weak1} and Remark \ref{weak2}.\vspace*{0.2cm}
\begin{itemize}
\item[(a)] As a consequence of Theorem \ref{chebychev}, it holds that $L^p({I};X)\subset L^p_w({I};X)$ and
$$[f]_{L^p_w({I};X)}\leq\|f\|_{L^p({I};X)},$$
for each $f\in L^p({I};X)$.\vspace*{0.2cm}
\item[(b)] If $p\in[1,\infty)$ and $\alpha\in(0,1/p)$, a consequence of Theorem \ref{theoHLBochlemma} is that
$$[J^\alpha_{t_0,t}f]_{L_w^{p/(1-p\alpha)}(I;X)}\leq K_{\alpha,p}\|f\|_{L^{p}(I;X)},$$
for every $f\in L^{p}(I;X)$, where $K_{\alpha,p}$ is given in \eqref{equationauxconst}.  In fact, just observe that for any $f\in L^{p}(I;X)$, we have by Theorem \ref{theoHLBochlemma} that
\begin{multline*}\hspace*{2cm}[J^\alpha_{t_0,t}f]_{L_w^{p/(1-p\alpha)}(I;X)}=[J^\alpha_{t_0,t}\|f\|_X]_{L_w^{p/(1-p\alpha)}(I;\mathbb{R})}
{\leq}\\ K_{\alpha,p}\|\|f\|_X\|_{L^{p}(I;\mathbb{R})}=K_{\alpha,p}\|f\|_{L^{p}(I;X)}.\end{multline*}

The above argument allows us to conclude that RL fractional integral of order $\alpha$ defines a ``bounded'' operator from $L^p({I};X)$ into $L_w^{p/(1-p\alpha)}({I};X)$. The term bounded is placed between quotation marks because the quasi-norm of the $L_w^{p/(1-p\alpha)}({I};X)$ does not make it a Banach space.\vspace*{0.2cm}
\end{itemize}
\end{remark}

Let us now recall Marcinkiewicz's interpolation theorem (for details on the classical proof of this result see \cite[Theorem 1]{Zig01}).

\begin{theorem}[Marcinkiewicz's Interpolation]\label{marcinkiewicz} Consider real numbers $p_1,p_2,q_1,q_2$ such that $1\leq p_i\leq q_i\leq\infty,$ for $i=1,2,$ and $q_1\neq q_2$. Suppose that $T$ is a linear operator that is simultaneously of weak type $(p_1,q_1)_I$ and $(p_2,q_2)_I$. If $0<\theta<1$ and the real numbers $p_\theta,q_\theta$ are such that
$$\dfrac{1}{p_\theta}=\dfrac{1-\theta}{p_1}+\dfrac{\theta}{p_2}\quad\textrm{and}\quad\dfrac{1}{q_\theta}=\dfrac{1-\theta}{q_1}+\dfrac{\theta}{q_2},$$
then there exists $M_\theta>0$ such that
$$\|Tf\|_{L^{q_\theta}(I;\mathbb{R})}\leq M_\theta\|f\|_{L^{p_\theta}(I;\mathbb{R})},$$
for every $f\in L^{p_\theta}(I;\mathbb{R})$.
\end{theorem}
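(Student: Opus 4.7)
The plan is to carry out the standard Marcinkiewicz decomposition argument: a two-level truncation of $f$ combined with the layer-cake representation. Without loss of generality assume $q_1<q_2$ (and, after a symmetric relabelling if needed, $p_1<p_2$); the interpolation identities then force $p_1<p_\theta<p_2$ and $q_1<q_\theta<q_2$. The identity
$$\|Tf\|_{L^{q_\theta}(I;\mathbb{R})}^{q_\theta}=q_\theta\int_0^\infty r^{q_\theta-1}\lambda_{Tf}(r)\,dr$$
reduces the statement to a pointwise estimate of $\lambda_{Tf}(r)$ in terms of $\|f\|_{L^{p_\theta}(I;\mathbb{R})}$.

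For each $r>0$ I fix a threshold $\sigma(r)>0$ (a power of $r$, to be chosen) and decompose $f = f_r^{(1)} + f_r^{(2)}$, where $f_r^{(1)} = f\chi_{\{|f|>\sigma(r)\}}$ and $f_r^{(2)} = f\chi_{\{|f|\leq\sigma(r)\}}$. The inequalities $|f|^{p_1}\leq |f|^{p_\theta}\sigma(r)^{p_1-p_\theta}$ on $\{|f|>\sigma(r)\}$ and $|f|^{p_2}\leq |f|^{p_\theta}\sigma(r)^{p_2-p_\theta}$ on $\{|f|\leq\sigma(r)\}$ (valid because $p_1<p_\theta<p_2$) place $f_r^{(1)}$ in $L^{p_1}(I;\mathbb{R})$ and $f_r^{(2)}$ in $L^{p_2}(I;\mathbb{R})$. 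By linearity of $T$,
$$\{t\in I:|Tf(t)|>r\}\subset\{t\in I:|Tf_r^{(1)}(t)|>r/2\}\cup\{t\in I:|Tf_r^{(2)}(t)|>r/2\},$$
exactly the splitting trick already employed in the proof of Theorem \ref{theoHLBochlemma} (with the roles of the kernel and input interchanged). Invoking the two weak-type hypotheses then yields
$$\lambda_{Tf}(r)\leq\left(\dfrac{2c_1\|f_r^{(1)}\|_{L^{p_1}(I;\mathbb{R})}}{r}\right)^{q_1}+\left(\dfrac{2c_2\|f_r^{(2)}\|_{L^{p_2}(I;\mathbb{R})}}{r}\right)^{q_2}$$
when both $q_i<\infty$; if $q_i=\infty$ one instead chooses $\sigma(r)$ so that the corresponding super-level set is Lebesgue-null, mimicking parts (i) and (ii) of the proof of Theorem \ref{theoHLBochlemma}.

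Substituting this bound into the layer-cake identity and swapping the order of integration via Fubini reduces everything to explicit one-dimensional integrals in $r$, once $\sigma(r)$ is taken as the unique power of $r$ that balances the two contributions; the exponent is pinned down by the interpolation relations for $1/p_\theta$ and $1/q_\theta$. Evaluating the resulting integrals and collecting constants produces the desired estimate with $M_\theta$ explicit in $p_i,q_i,c_i,\theta$.

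The main obstacle is the bookkeeping. In particular, when $q_i\neq p_i$ the expression $\|f_r^{(i)}\|_{L^{p_i}}^{q_i}$ is not itself an integral of a power of $|f|$, so Fubini cannot be applied directly to the layer-cake integral. The standard resolution is to pass first to the distribution function of $f$ — writing $\|f_r^{(i)}\|_{L^{p_i}}^{p_i}$ as a weighted integral of $\lambda_f$ restricted to the appropriate tail — then use an elementary numerical inequality such as $(a+b)^{s}\leq 2^{s}(a^s+b^s)$ to split the $q_i$-th power, and only afterwards integrate in $r$ and reassemble the result into a constant multiple of $\|f\|_{L^{p_\theta}}^{p_\theta}$.
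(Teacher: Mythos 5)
You should first note that the paper does not actually prove this statement: it quotes the theorem and refers to Zygmund \cite[Theorem 1]{Zig01} (and to his identity (3.12) for the constant in Remark \ref{minkuconst}). Your outline follows the same classical route as that cited proof — layer-cake formula, an $r$-dependent two-level truncation of $f$, the inclusion of super-level sets exactly as in the proof of Theorem \ref{theoHLBochlemma}, then the two weak-type bounds — so the strategy is right, but as written it has a genuine gap at the decisive analytic step.

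After inserting the weak-type estimates you must control $\int_0^\infty r^{q_\theta-1-q_1}\big\|f_r^{(1)}\big\|_{L^{p_1}(I;\mathbb{R})}^{q_1}\,dr$ and its $q_2$-analogue. When $q_1>p_1$ — which is precisely the case in which the paper uses the theorem, since there $q_i=p_i/(1-p_i\alpha)>p_i$ — the integrand is the $(q_1/p_1)$-th power of an integral that depends on $r$ through the truncation level, and your proposed fix (rewrite $\|f_r^{(1)}\|_{p_1}^{p_1}$ via $\lambda_f$, split with $(a+b)^s\leq 2^s(a^s+b^s)$, then integrate in $r$) does not resolve it: you are still left with terms of the form $\int_0^\infty r^{q_\theta-1-q_1}\bigl(\int_{\sigma(r)}^\infty s^{p_1-1}\lambda_f(s)\,ds\bigr)^{q_1/p_1}dr$, to which Fubini does not apply. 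The classical proofs close exactly this step with Hardy's inequality with power weights (equivalently, a Minkowski-type integral inequality for the exponent $q_i/p_i\geq 1$), and this is where the hypothesis $p_i\leq q_i$ enters; your sketch never uses that hypothesis, which is a sign the essential ingredient is missing. Two further points need repair: your final bound ``a constant multiple of $\|f\|_{L^{p_\theta}}^{p_\theta}$'' has the wrong homogeneity (the layer-cake target is $M_\theta^{q_\theta}\|f\|_{p_\theta}^{q_\theta}$, and $p_\theta\neq q_\theta$ in general), so you must either normalize $\|f\|_{p_\theta}=1$ at the outset or let $\sigma(r)$ carry a factor involving $\|f\|_{p_\theta}$ rather than be a pure power of $r$; and the reduction ``WLOG $p_1<p_2$ and $q_1<q_2$'' is not available from the hypotheses (only $q_1\neq q_2$ is assumed, so $p_1=p_2$ or oppositely ordered exponents can occur), and those cases require a separate, simpler argument in which one splits the $r$-integral instead of decomposing $f$.
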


\begin{remark}\label{minkuconst} It is possible to compute the value $M_\theta$ introduced in the above theorem. More precisely, if we recall that $\|T\|_{(p_i,q_i)_I}$ denotes the weak-type $(p_i,q_i)_I$ norm of the operator $T$, for $i=1,2$, then the proof of Theorem \ref{marcinkiewicz} gives us that (see identity (3.12) in \cite{Zig01} for details)
$$M_\theta=K \|T\|_{(p_1,q_1)_I}^{1-\theta} \|T\|_{(p_2,q_2)_I}^{\theta},$$
where
$$K=2(q_\theta)^{1/q_\theta}\left[\dfrac{(p_1/p_\theta)^{q_1/p_1}}{q_\theta-q_1}+\dfrac{(p_2/p_\theta)^{q_2/p_2}}{q_2-q_\theta}\right]^{1/q_\theta}.$$

It is important to note that $M_\theta$ may possibly be bigger than $\|T\|_{\mathcal{L}(L^{p_\theta}(I;\mathbb{R}),L^{q_\theta}(I;\mathbb{R}))}$.
\end{remark}

Following from Theorem \ref{theoHLBochlemma} and Theorem \ref{marcinkiewicz}, we now present our version of Theorem \ref{theoHL} to Bochner-Lebesgue spaces. Note that it is a shorter proof using interpolation theory of the already mentioned result o Hard-Littlewood applied to RL fractional integrals.

\begin{theorem}\label{theoHLBoch}Consider $p\in(1,\infty)$, $\alpha\in(0,1/p)$ and assume that $I=[t_0,t_1]$ or $I=[t_0,\infty)$. If $f\in L^p(I;X)$, then $J_{t_0,t}^\alpha f(t)$ is Bochner integrable in $I$ and belongs to $L^{p/(1-p\alpha)}(I;X)$. Moreover, there exists $C_{\alpha,p}>0$ such that
\begin{equation*}\left[\int_{I}{\left\|J^\alpha_{t_0,s}f(s)\right\|_X^{p/(1-p\alpha)}}\,ds\right]^{(1-p\alpha)/p}\leq C_{\alpha,p}\left[\int_{I}{\|f(s)\|_X^p}\,ds\right]^{1/p}.\end{equation*}
In other words, $J_{t_0,t}^\alpha$ defines a bounded operator from $L^p(I;X)$ into $L^{p/(1-p\alpha)}(I;X)$.

\end{theorem}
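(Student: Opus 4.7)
The plan is to reduce to the scalar-valued case and then interpolate. The two ingredients needed are already available: the weak-type estimate from Theorem \ref{theoHLBochlemma} at an arbitrary pair of endpoints, and Marcinkiewicz's interpolation theorem (Theorem \ref{marcinkiewicz}).

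First I would fix two auxiliary exponents $p_1, p_2$ with $1 \le p_1 < p < p_2 < 1/\alpha$ (e.g.\ $p_1 = 1$ and any $p_2 \in (p, 1/\alpha)$; this is possible precisely because $p < 1/\alpha$). For $i = 1, 2$ define $q_i := p_i/(1 - p_i \alpha)$, so that each pair $(p_i, q_i)$ satisfies $1 \le p_i \le q_i < \infty$. Theorem \ref{theoHLBochlemma} then tells us that $J_{t_0,t}^\alpha$, viewed as a scalar-valued operator, is simultaneously of weak type $(p_1, q_1)_I$ and $(p_2, q_2)_I$ on the same interval $I$ (bounded or unbounded), with explicit quasi-norm bounds coming from \eqref{equationauxconst}.

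Next I would interpolate. Let $\theta \in (0,1)$ be the unique number satisfying $1/p = (1-\theta)/p_1 + \theta/p_2$. The key algebraic observation is that the endpoint pairs lie on the ``Hardy--Littlewood line'': since $1/q_i - 1/p_i = -\alpha$ for $i = 1, 2$, one immediately gets
$$\frac{1}{q_\theta} = \frac{1-\theta}{q_1} + \frac{\theta}{q_2} = \frac{1-\theta}{p_1} + \frac{\theta}{p_2} - \alpha = \frac{1}{p} - \alpha = \frac{1 - p\alpha}{p},$$
so the interpolation target is exactly $(p, p/(1-p\alpha))$. Theorem \ref{marcinkiewicz} therefore produces a constant $C_{\alpha,p} > 0$ such that $\|J_{t_0,t}^\alpha g\|_{L^{p/(1-p\alpha)}(I;\mathbb{R})} \le C_{\alpha,p}\|g\|_{L^p(I;\mathbb{R})}$ for every $g \in L^p(I;\mathbb{R})$; the value of $C_{\alpha,p}$ can, if desired, be read off from Remark \ref{minkuconst}.

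Finally I would lift to the Bochner setting by domination. Given $f \in L^p(I;X)$, the scalar function $g(s) := \|f(s)\|_X$ belongs to $L^p(I;\mathbb{R})$, and the scalar estimate above shows $J_{t_0,t}^\alpha g(t) < \infty$ for almost every $t \in I$. For such $t$ the integrand $(t-s)^{\alpha-1}f(s)$ is absolutely Bochner integrable on $(t_0, t)$, so $J_{t_0,t}^\alpha f(t)$ exists in $X$ and the standard Bochner inequality gives $\|J_{t_0,t}^\alpha f(t)\|_X \le J_{t_0,t}^\alpha g(t)$ a.e. Taking $L^{p/(1-p\alpha)}(I;\mathbb{R})$-norms on both sides and applying the scalar estimate yields the desired bound, with the same constant $C_{\alpha,p}$, thereby proving that $J_{t_0,t}^\alpha : L^p(I;X) \to L^{p/(1-p\alpha)}(I;X)$ is bounded.

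The main obstacle is really only bookkeeping: one must choose valid endpoints $p_1, p_2$ on either side of $p$ inside the admissible range $[1, 1/\alpha)$ and verify that the Marcinkiewicz target exponent matches $p/(1-p\alpha)$. This matching is automatic because both endpoints were put on the line $1/q - 1/p = -\alpha$. Everything else, including Bochner integrability and the vector-to-scalar reduction, is standard once that interpolation picture is in place.
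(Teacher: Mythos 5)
Your proposal is correct and follows essentially the same route as the paper: apply the weak-type estimates of Theorem \ref{theoHLBochlemma} at two endpoints $p_1<p<p_2<1/\alpha$ lying on the line $1/q=1/p-\alpha$, invoke Marcinkiewicz interpolation (Theorem \ref{marcinkiewicz}) to obtain the scalar strong-type $(p,p/(1-p\alpha))$ bound, and then pass to the Bochner setting via $\|J_{t_0,t}^\alpha f(t)\|_X\leq J_{t_0,t}^\alpha\|f(t)\|_X$. The only cosmetic difference is that you allow $p_1=1$ while the paper takes $p_1\in(1,p)$, which is immaterial since the weak-type lemma covers $p=1$.
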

\begin{proof} The Bochner integrability of $J^\alpha_{t_0,t}f(t)$ follows from Theorem \ref{minkowskiseq}. Since $p\in (1,\infty)$ and $\alpha\in(0,1/p)$, choose $p_1\in(1,p)$ and $p_2\in(p,1/\alpha)$. Define $q_1=p_1/(1-p_1\alpha)$, $q_2=p_2/(1-p_2\alpha)$, and observe that Theorem \ref{theoHLBochlemma} ensures that $J_{t_0,t}^\alpha$ defines an operator of weak-type $(p_1,q_1)_I$ and $(p_2,q_2)_I$.

Hence, for each $\theta\in(0,1)$, Theorem \ref{marcinkiewicz} ensures the existence of $M_\theta:=C_{\alpha,p}>0$, that depends of the variables $p_1,p_2,p$ and $\alpha$, such that
$$\|J_{t_0,t}^\alpha f\|_{L^{q_\theta}(I;\mathbb{R})}\leq C_{\alpha,p}\|f\|_{L^{p_\theta}(I;\mathbb{R})},$$
for every $f\in L^{p_\theta}(I;\mathbb{R})$, where
$$\dfrac{1}{p_\theta}=\dfrac{1-\theta}{p_1}+\dfrac{\theta}{p_2}\quad\textrm{and}\quad\dfrac{1}{q_\theta}=\dfrac{1-\theta}{q_1}+\dfrac{\theta}{q_2}.$$

By choosing $\theta=p_2(p-p_1)/[p(p_2-p_1)]$, we deduce that $p_\theta=p$ and $q_\theta=p/(1-p\alpha)$ what lead us to
\begin{equation}\label{auxcont01}\|J_{t_0,t}^\alpha f\|_{L^{p/(1-p\alpha)}(I;\mathbb{R})}\leq C_{\alpha,p}\|f\|_{L^{p}(I;\mathbb{R})},\end{equation}
for every $f\in L^{p}(I;\mathbb{R})$.

Now, since for any $f\in L^{p}(I;X)$ it holds that $\|f\|_X\in L^{p}(I;\mathbb{R})$, by \ref{auxcont01} we deduce that
\begin{equation*}\|J_{t_0,t}^\alpha f\|_{L^{p/(1-p\alpha)}(I;X)}\leq \|J_{t_0,t}^\alpha \|f\|_{X}\|_{L^{{p/(1-p\alpha)}}(I;\mathbb{R})}{\leq} C_{\alpha,p}\|\|f\|_{X}\|_{L^{{p}}(I;\mathbb{R})}=C_{\alpha,p}\|f\|_{L^{p}(I;X)},\end{equation*}
what completes the proof of our theorem.
\end{proof}

\begin{remark} As pointed by Theorem \ref{theoHL}, we emphasize that constant $C_{\alpha,p}$ of Theorem \ref{theoHLBoch} does not depends on the length of $I$.
\end{remark}

\subsection{Remaining Cases} In the beginning of this section we stated that $p/(1-p\alpha)$ is the biggest exponent such that the operator induced by RL fractional integral can define a linear (and also bounded) operator, regardless of whether $I$ is equal to $[t_0,t_1]$ or $[t_0,\infty)$. Let us now prove this fact.

\begin{theorem}\label{maxbigg} Let $p\in(1,\infty)$ and $\alpha\in(0,1/p)$.\vspace*{0.2cm}
\begin{itemize}
\item[(i)] If $q\in\big[1,p/(1-p\alpha)\big]$, then
$$J_{t_0,t}^\alpha:L^p(t_0,t_1;X)\rightarrow L^{q}(t_0,t_1;X),$$
is a bounded operator and for every $f\in L^p(t_0,t_1;X)$ it holds that
\begin{equation}\label{inenov45}\left\|J^\alpha_{t_0,t}\,\,f\right\|_{L^q(t_0,t_1;X)}\leq C_{\alpha,p}(t_1-t_0)^{[(p-q)/pq]+\alpha}\|f\|_{L^p(t_0,t_1;X)},\end{equation}
where $C_{\alpha,p}$ is given in Theorem \ref{theoHLBoch}. Moreover, for any $\eta\in(p/(1-p\alpha),\infty]$, there exists $f_\eta\in L^p(t_0,t_1;X)$ such that $J_{t_0,t}^\alpha f_\eta(t)$ does not belongs to $L^\eta(t_0,t_1;X)$. \vspace*{0.2cm}
\item[(ii)] If $\eta\in[1,\infty]$, however $\eta\not=p/(1-p\alpha)$, there exists $f_\eta\in L^p(t_0,\infty;X)$ such that $J_{t_0,t}^\alpha f_\eta(t)$ does not belongs to $L^\eta(t_0,\infty;X)$.
\end{itemize}
\end{theorem}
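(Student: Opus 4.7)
The plan is to obtain the boundedness in part (i) directly from Theorem \ref{theoHLBoch} by an elementary H\"older step on the bounded interval, and to establish the sharpness statements in both parts by exhibiting explicit power-type counterexamples of the form $f_\eta(t) = x_0(t-t_0)^{-\gamma}$ for a fixed unit vector $x_0\in X$, with the exponent $\gamma$ tuned according to $\eta$. For the boundedness in (i), fix $q\in[1,r]$ with $r:=p/(1-p\alpha)$. H\"older's inequality with exponents $r/q$ and $r/(r-q)$ applied to $\int_{t_0}^{t_1}\|J^\alpha_{t_0,t}f(t)\|_X^q\,dt$ yields
$$\|J^\alpha_{t_0,t}f\|_{L^q(t_0,t_1;X)}\leq (t_1-t_0)^{1/q-1/r}\,\|J^\alpha_{t_0,t}f\|_{L^r(t_0,t_1;X)},$$
and since $1/q-1/r = (p-q)/(pq)+\alpha$, invoking Theorem \ref{theoHLBoch} produces \eqref{inenov45}.

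For the sharpness claim in (i), and simultaneously for the sub-case $\eta\in(r,\infty]$ of (ii), I take $f_\eta(t):=x_0(t-t_0)^{-\gamma}\chi_{[t_0,t_0+1]}(t)$, where the characteristic factor is needed only in the unbounded-interval situation. Using the substitution $s=t_0+(t-t_0)u$ together with the Beta-function identity one obtains
$$J^\alpha_{t_0,t}f_\eta(t) = \frac{\Gamma(1-\gamma)}{\Gamma(\alpha+1-\gamma)}(t-t_0)^{\alpha-\gamma}\,x_0,\qquad t\in(t_0,t_0+1),$$
provided $\gamma<1$. The function $f_\eta$ belongs to $L^p$ if and only if $\gamma p<1$, while $J^\alpha_{t_0,t}f_\eta$ falls outside $L^\eta$ on any neighbourhood of $t_0$ as soon as $\gamma\geq\alpha+1/\eta$ for finite $\eta$, or $\gamma>\alpha$ for $\eta=\infty$. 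Since the hypothesis $\eta>r$ is equivalent to $1/\eta<1/p-\alpha$, the window $[\alpha+1/\eta,1/p)$ (resp.\ $(\alpha,1/p)$) is non-empty, and any $\gamma$ inside it does the job.

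For the remaining sub-case $\eta\in[1,r)$ of (ii), the counterexample must fail at infinity rather than near $t_0$; I would take $f_\eta(t):=x_0(t-t_0)^{-\gamma}\chi_{[t_0+1,\infty)}(t)$ with $\gamma\in(1/p,\min\{1,\alpha+1/\eta\})$, a non-empty interval because $\eta<r$ is equivalent to $\alpha+1/\eta>1/p$, and the condition $\gamma p>1$ secures $f_\eta\in L^p(t_0,\infty;X)$. The same substitution gives
$$J^\alpha_{t_0,t}f_\eta(t) = \frac{(t-t_0)^{\alpha-\gamma}}{\Gamma(\alpha)}\,x_0\int_{1/(t-t_0)}^{1}(1-u)^{\alpha-1}u^{-\gamma}\,du,$$
whose integral converges to $B(\alpha,1-\gamma)$ as $t\to\infty$; hence $\|J^\alpha_{t_0,t}f_\eta(t)\|_X\geq c\,(t-t_0)^{\alpha-\gamma}$ for $t$ large, and the inequality $(\alpha-\gamma)\eta\geq -1$ forces divergence of the $L^\eta$-norm at infinity. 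The only genuine obstacle is bookkeeping: one must verify that the admissible intervals for $\gamma$ are non-empty in every regime and compatible with the required strict integrability, and that the boundary cases $\eta\in\{1,\infty\}$ are handled uniformly by the same formulas, reading $1/\infty$ as $0$.
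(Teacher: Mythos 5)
Your proposal is correct and follows essentially the same route as the paper: the boundedness in (i) is Theorem \ref{theoHLBoch} composed with the H\"older/embedding step on the finite interval, and the sharpness claims use the same power-type functions $(t-t_0)^{-\gamma}x_0$ (truncated near $t_0$ or supported in the tail) with exactly the same exponent windows $[\alpha+1/\eta,1/p)$, $(\alpha,1/p)$ and $(1/p,\min\{1,\alpha+1/\eta\})$. The only cosmetic difference is in the tail case, where you justify the lower bound $\|J^\alpha_{t_0,t}f_\eta(t)\|_X\gtrsim (t-t_0)^{\alpha-\gamma}$ via convergence of the truncated integral to $B(\alpha,1-\gamma)$, while the paper uses the cruder bound $s^{-\beta_\eta}\ge 1$; both are valid.
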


\begin{proof} $(i)$ Let $q\in\big[1,p/(1-p\alpha)\big)$ and consider the bounded (embedding) operator
$$\begin{array}{lclc}E:&L^{p/(1-p\alpha)}(t_0,t_1;X)&\rightarrow& L^q(t_0,t_1;X),\vspace*{0.0cm}\\
&f(t)&\mapsto& f(t),\end{array}$$
which satisfies, for any $f\in L^{p/(1-p\alpha)}(t_0,t_1;X)$, the estimate
$$\|Ef\|_{L^{q}(t_0,t_1;X)}\leq(t_1-t_0)^{[(p-q)/pq]+\alpha}\|f\|_{L^{p/(1-p\alpha)}(t_0,t_1;X)}.$$

Since
$$J_{t_0,t}^\alpha:L^p(t_0,t_1;X)\rightarrow L^{q}(t_0,t_1;X)$$
is given by the composition of the bounded operator (see Theorem \ref{theoHLBoch})
$$J_{t_0,t}^\alpha:L^p(t_0,t_1;X)\rightarrow L^{p/(1-p\alpha)}(t_0,t_1;X),$$
with the bounded operator $E$, we deduce that it is bounded and that it satisfies \eqref{inenov45}.

To verify the second part of item $(i)$, we consider:
\begin{itemize}
\item[(a)] for $\eta\in(p/(1-p\alpha),\infty)$, choose ${\beta_\eta}\in\big(\alpha+(1/\eta),1/p\big)$;\vspace*{0.2cm}
\item[(b)] for $\eta=\infty$, choose ${\beta_\eta}\in\big(\alpha,1/p\big)$.
\end{itemize}
Then, fix $x\in X$, with $\|x\|_X=1$, and let function $f_\eta:(t_0,t_1]\rightarrow X$ be given by $f_\eta(t)=(t-t_0)^{-{\beta_\eta}}x$.

Note that $f_\eta$ belongs to $L^p(t_0,t_1;X)$ and
$$J_{t_0,t}^\alpha f_\eta(t)=\dfrac{\Gamma(1-{\beta_\eta})}{\Gamma(1+\alpha-{\beta_\eta})}(t-t_0)^{\alpha-{\beta_\eta}}x,$$
does not belongs to $L^\eta(t_0,t_1;X)$, since $(\alpha-{\beta_\eta})\eta+1<0$ (in the case $(a)$) and $\alpha-{\beta_\eta}<0$ (in the case $(b)$).

$(ii)$ If $\eta\in[1,p/(1-p\alpha))$, consider ${\beta_\eta}\in\big(1/p,\min{\{1,\alpha+(1/\eta)\}}\big)$, $x\in X$ and define function $f_\eta:[t_0,\infty)\rightarrow X$ by
$$f_\eta(t)=\left\{\begin{array}{ll}0,&\textrm{if }t\in[t_0,t_0+1],\vspace*{0.2cm}\\
(t-t_0)^{-{\beta_\eta}}x,&\textrm{if }t\in(t_0+1,\infty).\end{array}\right.$$
This set up allows us to conclude that $f_\eta\in L^p(t_0,\infty;X)$. On the other hand, observe that
$$J_{t_0,t}^\alpha f_\eta(t)=\left\{\begin{array}{ll}0,&\textrm{if }t\in[t_0,t_0+1],\vspace*{0.2cm}\\
\left[\dfrac{(t-t_0)^{\alpha-{\beta_\eta}}}{\Gamma(\alpha)}\right]\left[\displaystyle\int_{1/(t-t_0)}^1{(1-s)^{\alpha-1}s^{-{\beta_\eta}}}\,ds\right]\,x,&\textrm{if }t\in(t_0+1,\infty).\end{array}\right.$$
Hence, we have that
$$\|J_{t_0,t}^\alpha f_\eta\|^\eta_{L^{\eta}(t_0,\infty;X)}=
\int_{t_0+1}^\infty\left[\dfrac{(t-t_0)^{\alpha-{\beta_\eta}}}{\Gamma(\alpha)}\right]^{\eta}\left[\displaystyle\int_{1/(t-t_0)}^1{(1-s)^{\alpha-1}s^{-{\beta_\eta}}}\,ds\right]^{\eta}\,dt.$$
However, since
$$\int_{1/(t-t_0)}^1{(1-s)^{\alpha-1}s^{-{\beta_\eta}}}\,ds\geq\int_{1/(t-t_0)}^1{(1-s)^{\alpha-1}}\,ds= \alpha^{-1}\left(\dfrac{t-t_0-1}{t-t_0}\right)^{\alpha},$$
for almost every $t\in[t_0+1,\infty)$, we obtain
$$\|J_{t_0,t}^\alpha f_\eta\|^\eta_{L^{\eta}(t_0,\infty;X)}\geq
\left\{\int_{t_0+1}^\infty\left[\dfrac{(t-t_0-1)^{\alpha}(t-t_0)^{-{\beta_\eta}}}{\Gamma(\alpha+1)}\right]^{\eta}\,dt\right\}.$$

By changing the variable $t=w+t_0$, we obtain
$$\|J_{t_0,t}^\alpha f_\eta\|^\eta_{L^{\eta}(t_0,\infty;X)}\geq
\left(\dfrac{1}{{\Gamma(\alpha+1)}}\right)^\eta\left\{\int_{1}^\infty{(w-1)^{\alpha\eta}w^{-{\beta_\eta}\eta}}\,dw\right\}.$$

Finally, since $-{\beta_\eta}\eta<0$, we have
\begin{multline*}\|J_{t_0,t}^\alpha f_\eta\|^\eta_{L^{\eta}(t_0,\infty;X)}\geq
\left(\dfrac{1}{{\Gamma(\alpha+1)}}\right)^\eta\left\{\int_{1}^r{(w-1)^{\alpha\eta}w^{-{\beta_\eta}\eta}}\,dw\right\}\\
\geq\left(\dfrac{1}{{\Gamma(\alpha+1)}}\right)^\eta\dfrac{(r-1)^{\alpha\eta+1}r^{-{\beta_\eta}\eta}}{{\big(\alpha\eta+1\big)}},\end{multline*}
for any $1< r <\infty$. Therefore
 $$ \|J_{t_0,t}^\alpha f_\eta\|^\eta_{L^{\eta}(t_0,\infty;X)}
\geq\lim_{r\rightarrow\infty}\left(\dfrac{1}{{\Gamma(\alpha+1)}}\right)^\eta\dfrac{(r-1)^{\alpha\eta+1}r^{-{\beta_\eta}\eta}}{{\big(\alpha\eta+1\big)}}=\infty,
$$
i.e., $J_{t_0,t}^\alpha f_\eta(t)$ does not belongs to $L^\eta(t_0,t_1;X)$.

Now assume that $\eta\in(p/(1-p\alpha),\infty]$. In this case we can proceed almost like it was done in item $(i)$. To be more specific, we consider:
\begin{itemize}
\item[(a)] for $\eta\in(p/(1-p\alpha),\infty)$, choose ${\beta_\eta}\in\big(\alpha+(1/\eta),1/p\big)$;\vspace*{0.2cm}
\item[(b)] for $\eta=\infty$, choose ${\beta_\eta}\in\big(\alpha,1/p\big)$.
\end{itemize}
Then we fix $x\in X$, with $\|x\|_X=1$, and define function $f_\eta:(t_0,\infty)\rightarrow X$ by
$$f_\eta(t)=\left\{\begin{array}{ll}(t-t_0)^{-{\beta_\eta}}x,&\textrm{if }t\in[t_0,t_0+1],\vspace*{0.2cm}\\
0,&\textrm{if }t\in(t_0+1,\infty).\end{array}\right.$$

Like it was done before, we conclude that $f_\eta\in L^p(t_0,\infty;X)$. However, since
$$J_{t_0,t}^\alpha f_\eta(t)=\left\{\begin{array}{ll}\dfrac{\Gamma(1-{\beta_\eta})}{\Gamma(1+\alpha-{\beta_\eta})}(t-t_0)^{\alpha-{\beta_\eta}}x,&\textrm{if }t\in[t_0,t_0+1],\vspace*{0.3cm}\\
\dfrac{x}{\Gamma(\alpha)}\displaystyle\int_{t_0}^{t_0+1}(t-s)^{\alpha-1}(s-t_0)^{-{\beta_\eta}}\,ds,&\textrm{if }t\in(t_0+1,\infty),\end{array}\right.$$
we deduce that
$$\|J_{t_0,t}^\alpha f_\eta\|_{L^\eta(t_0,\infty;X)}\geq\left\|\dfrac{\Gamma(1-{\beta_\eta})}{\Gamma(1+\alpha-{\beta_\eta})}(t-t_0)^{\alpha-{\beta_\eta}}x\right\|_{L^\eta(t_0,t_0+1;X)}=\infty,$$
i.e., $J_{t_0,t}^\alpha f_\eta(t)$ does not belongs to $L^\eta(t_0,\infty;X)$.
\end{proof}

In the beginning of this section we have showed that Theorem \ref{theoHLBochlemma} together with Theorem \ref{marcinkiewicz} could be converted in Theorem \ref{theoHLBoch}. In other words, we have showed that for any $p\in(1,\infty)$ and $\alpha\in(0,1/p)$, RL fractional integral of order $\alpha$ defines a bounded linear operator from $L^p(I;X)$ into $L_w^{p/(1-p\alpha)}(I;X)$. Now, let us consider the critical case $p=1$.

\begin{theorem}\label{critcasep1} Assume that $\alpha\in(0,1)$.
\begin{itemize}
\item[(i)] If $f\in L^1(t_0,t_1;X)$ we have that $J_{t_0,t}^\alpha f\in L^q(t_0,t_1;X)$, for any $q\in\big[1,1/(1-\alpha)\big)$, and it also holds that
$$\hspace*{1cm}\big\|J_{t_0,t}^\alpha f\big\|_{L^q(t_0,t_1;\mathbb{R})}\leq K_{\alpha,1}\left(\dfrac{(t_1-t_0)^{[1-q(1-\alpha)]}}{1-q(1-\alpha)}\right)^{1/q}\|f\|_{L^1(t_0,t_1;\mathbb{R})},$$
where $K_{\alpha,1}$ is given in Theorem \ref{theoHLBochlemma}. In other words, $J_{t_0,t}^\alpha$ defines a bounded operator from $L^1(t_0,t_1;X)$ into $L^q(t_0,t_1;X)$, for any $q\in\big[1,1/(1-\alpha)\big)$.\vspace*{0.2cm}
\item[(ii)] There exists a function $f\in L^1(t_0,t_1;X)$ such that $J^\alpha_{t_0,t}f\not\in L^{1/(1-\alpha)}(t_0,t_1;X)$.\vspace*{0.2cm}
\item[(iii)] For any $\eta\in[1,\infty]$, there exists $f_\eta\in L^1(t_0,\infty;X)$ such that $J^\alpha_{t_0,t}f\not\in L^{\eta}(t_0,\infty;X)$.\vspace*{0.2cm}
\end{itemize}
\end{theorem}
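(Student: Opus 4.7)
My plan is to handle the three items independently, using in order an embedding argument, an abstract contradiction via the Closed Graph Theorem, and explicit counterexamples.

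For \textbf{item (i)}, I would chain the weak-type bound of Theorem \ref{theoHLBochlemma} with the embedding $L^{1/(1-\alpha)}_w(t_0,t_1;\mathbb{R}) \hookrightarrow L^q(t_0,t_1;\mathbb{R})$ provided by Remark \ref{inclusoeslp}(ii)(a). For scalar $f$, that embedding gives
$$\|J^\alpha_{t_0,t}f\|_{L^q(t_0,t_1;\mathbb{R})} \le \left(\tfrac{1}{1-q(1-\alpha)}\right)^{1/q} (t_1-t_0)^{[1-q(1-\alpha)]/q}\, [J^\alpha_{t_0,t}f]_{L^{1/(1-\alpha)}_w(t_0,t_1;\mathbb{R})},$$
and substituting $[J^\alpha_{t_0,t}f]_{L^{1/(1-\alpha)}_w}\le K_{\alpha,1}\|f\|_{L^1}$ recovers the stated inequality with exactly the stated constants. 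The Bochner-valued version then follows verbatim from the argument at the end of the proof of Theorem \ref{theoHLBoch}: apply the scalar inequality to $s \mapsto \|f(s)\|_X$ and invoke $\|J^\alpha_{t_0,t}f(t)\|_X \le J^\alpha_{t_0,t}\|f\|_X(t)$.

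For \textbf{item (ii)}, my strategy is proof by contradiction via the Closed Graph Theorem. Suppose every $f\in L^1(t_0,t_1;X)$ satisfies $J^\alpha_{t_0,t}f \in L^{1/(1-\alpha)}(t_0,t_1;X)$; then $J^\alpha_{t_0,t}:L^1\to L^{1/(1-\alpha)}$ has closed graph (if $f_n\to f$ in $L^1$ and $J^\alpha_{t_0,t}f_n\to g$ in $L^{1/(1-\alpha)}$, Theorem \ref{minkowskiseq} yields $J^\alpha_{t_0,t}f_n\to J^\alpha_{t_0,t}f$ in $L^1$, and a common a.e.\ subsequence forces $g = J^\alpha_{t_0,t}f$), hence would be bounded. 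To contradict this, I would test on $f_n(t) = n\,\chi_{[t_0,t_0+1/n]}(t)\,x_0$ with $\|x_0\|_X=1$, so that $\|f_n\|_{L^1}=1$ and
$$J^\alpha_{t_0,t}f_n(t) = \tfrac{n\bigl[(t-t_0)^\alpha - \max\{t-t_0-1/n,0\}^\alpha\bigr]}{\Gamma(\alpha+1)}\,x_0.$$
Applying the mean value theorem on the bulk region $t>t_0+1/n$ (using $\alpha-1<0$ so that $\xi^{\alpha-1}\ge (t-t_0)^{\alpha-1}$) gives $\|J^\alpha_{t_0,t}f_n(t)\|_X\ge (t-t_0)^{\alpha-1}/\Gamma(\alpha)$, whence $\|J^\alpha_{t_0,t}f_n\|_{L^{1/(1-\alpha)}}^{1/(1-\alpha)}\ge \Gamma(\alpha)^{-1/(1-\alpha)}\log\bigl((t_1-t_0)n\bigr)\to\infty$, contradicting boundedness.

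For \textbf{item (iii)}, the two failure mechanisms on the half-line (tail behaviour and local singularity) split the argument on the size of $\eta$. For $\eta\in[1,1/(1-\alpha)]$, fix $x_0\in X$ with $\|x_0\|_X=1$ and set $f = \chi_{[t_0,t_0+1]}\,x_0 \in L^1(t_0,\infty;X)$; for $t>t_0+1$ the identity $J^\alpha_{t_0,t}f(t) = \frac{(t-t_0)^\alpha - (t-t_0-1)^\alpha}{\Gamma(\alpha+1)}x_0$ together with the mean value lower bound $(t-t_0)^\alpha-(t-t_0-1)^\alpha \ge \alpha(t-t_0)^{\alpha-1}$ yields $\|J^\alpha_{t_0,t}f(t)\|_X \ge C(t-t_0)^{\alpha-1}$, and since $(\alpha-1)\eta\ge -1$ on this range the tail $\int_{t_0+1}^\infty(t-t_0)^{(\alpha-1)\eta}dt$ diverges. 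For $\eta\in(1/(1-\alpha),\infty]$, I would mimic the local blow-up from Theorem \ref{maxbigg}(ii): choose $\beta_\eta\in(\alpha+1/\eta,1)$ (or $\beta_\eta\in(\alpha,1)$ when $\eta=\infty$), set $f_\eta(t) = (t-t_0)^{-\beta_\eta}\chi_{[t_0,t_0+1]}(t)x_0$ (which lies in $L^1$ because $\beta_\eta<1$), and observe that $J^\alpha_{t_0,t}f_\eta(t) = \frac{\Gamma(1-\beta_\eta)}{\Gamma(1+\alpha-\beta_\eta)}(t-t_0)^{\alpha-\beta_\eta}x_0$ on $[t_0,t_0+1]$ has a singularity at $t_0$ too strong to lie in $L^\eta$.

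The main obstacle is item (ii): although the Closed Graph reduction is soft, actually proving that $\|J^\alpha_{t_0,t}f_n\|_{L^{1/(1-\alpha)}}\to\infty$ requires a careful asymptotic analysis of $(t-t_0)^\alpha - (t-t_0-1/n)^\alpha$ on both the boundary layer $(t_0,t_0+1/n)$ and the bulk $(t_0+1/n,t_1)$, ensuring that the bulk contribution produces the decisive logarithmic divergence.
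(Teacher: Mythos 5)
Your proposal is correct, and while item (i) coincides with the paper's argument (weak-type bound from Theorem \ref{theoHLBochlemma} composed with the embedding of Remark \ref{inclusoeslp}(ii)(a), then passage to the Bochner-valued case through $s\mapsto\|f(s)\|_X$), your items (ii) and (iii) take a genuinely different route. For (ii) the paper exhibits an explicit counterexample, namely the logarithmically corrected function $f(t)=\{(t-t_0)/[2(t_1-t_0)]\}^{-1}\{\ln[2(t_1-t_0)/(t-t_0)]\}^{-\beta}x$ with $\beta\in(1,2-\alpha)$, and verifies directly that $J^\alpha_{t_0,t}f\notin L^{1/(1-\alpha)}$; you instead argue nonconstructively: the concentrating family $f_n=n\chi_{[t_0,t_0+1/n]}x_0$ has $\|f_n\|_{L^1}=1$ while the mean value bound $\xi^{\alpha-1}\ge(t-t_0)^{\alpha-1}$ gives $\|J^\alpha_{t_0,t}f_n\|_{L^{1/(1-\alpha)}}\gtrsim[\log(n(t_1-t_0))]^{1-\alpha}\to\infty$, so the operator is unbounded, and the Closed Graph Theorem (using Theorem \ref{minkowskiseq} for $L^1$-continuity and the inclusion $L^{1/(1-\alpha)}(t_0,t_1)\subset L^1(t_0,t_1)$ to identify limits) rules out its being everywhere defined into $L^{1/(1-\alpha)}$. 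This is logically sufficient for the existential statement, at the cost of not producing an explicit $f$; the paper's construction is more concrete and is reused in its proof of (iii). For (iii) your splitting is also different and arguably cleaner: you cover the whole range $\eta\in[1,1/(1-\alpha)]$, endpoint included, with the single function $\chi_{[t_0,t_0+1]}x_0$ via the tail lower bound $\|J^\alpha_{t_0,t}f(t)\|_X\ge(t-t_0)^{\alpha-1}/\Gamma(\alpha)$ and the divergence of $\int_{t_0+1}^\infty(t-t_0)^{(\alpha-1)\eta}dt$ for $(\alpha-1)\eta\ge-1$, whereas the paper handles $\eta\in[1/(1-\alpha),\infty)$ with a truncated log-weighted example (local singularity), $\eta=\infty$ with a power function, and $\eta<1/(1-\alpha)$ with a tail argument; your treatment of $\eta\in(1/(1-\alpha),\infty]$ via $f_\eta(t)=(t-t_0)^{-\beta_\eta}\chi_{[t_0,t_0+1]}x_0$ matches the paper's power-function mechanism. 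All exponent checks in your argument ($\beta_\eta<1$ for integrability, $(\alpha-\beta_\eta)\eta<-1$, respectively $\alpha-\beta_\eta<0$ for $\eta=\infty$) are in order, so the proposal stands as a complete, somewhat more economical alternative proof.
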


\begin{proof} $(i)$ Note that subitem $(a)$ of Remark \ref{inclusoeslp} ensures that
\begin{multline*}\big\|J_{t_0,t}^\alpha f\big\|_{L^q(t_0,t_1;X)}\leq\big\|J_{t_0,t}^\alpha \|f\|_X\big\|_{L^q(t_0,t_1;\mathbb{R})}
\\\leq\left(\dfrac{1}{1-q(1-\alpha)}\right)^{1/q}(t_1-t_0)^{[1-q(1-\alpha)]/q}\big[J_{t_0,t}^\alpha \|f\|_X\big]_{L_w^{1/(1-\alpha)}(t_0,t_1;\mathbb{R})}.\end{multline*}
Thus, Theorem \ref{theoHLBochlemma} ensures that
$$\big\|J_{t_0,t}^\alpha f\big\|_{L^q(t_0,t_1;X)}\leq K_{\alpha,1}\left(\dfrac{1}{1-q(1-\alpha)}\right)^{1/q}(t_1-t_0)^{[1-q(1-\alpha)]/q}\|f\|_{L^1(t_0,t_1;X)},$$
like we wanted.

$(ii)$ Inspired by \cite[Section 3.5 - item (iii)]{HaLi1}, we choose $\beta\in(1,2-\alpha)$, $x\in X$, with $\|x\|_X=1$, and consider $f:(t_0,t_1]\rightarrow X$ given by
$$f(t)=\{(t-t_0)/[2(t_1-t_0)]\}^{-1}\left\{\ln{\big[2(t_1-t_0)/(t-t_0)\big]}\right\}^{-\beta}x.$$
Observe that $f\in L^{1}\big(t_0,t_1;X\big)$, since
\begin{multline}\label{limitadolim}\|f\|_{L^{1}(t_0,t_1;X)}=\int_{t_0}^{t_1}\{(t-t_0)/[2(t_1-t_0)]\}^{-1}\left\{\ln{\big[2(t_1-t_0)/(t-t_0)\big]}\right\}^{-\beta}\,dt
\\\hspace*{0cm}\stackrel{t=2(t_1-t_0)s+t_0}= 2(t_1-t_0)\int_{0}^{1/2}s^{-1}\left(\log{s^{-1}}\right)^{-\beta}\,ds\\\stackrel{u=\ln{s^{-1}}}=
2(t_1-t_0)\int_{\ln{2}}^{\infty}{u^{-\beta}}\,du=\dfrac{2(t_1-t_0)\big(\log{2}\big)^{1-\beta}}{\beta-1}<\infty.\end{multline}

On the other hand, observe that
\begin{multline*}J_{t_0,t}^\alpha f(t)=\dfrac{x}{\Gamma(\alpha)}\int_{t_0}^t\left[{\dfrac{(t-s)^{\alpha-1}\big[2(t_1-t_0)/(s-t_0)\big]}{\left\{\ln{\big[2(t_1-t_0)/(s-t_0)\big]}\right\}^{\beta}}}\right]\,ds\\ \stackrel{s=2(t_1-t_0)w+t_0}=\dfrac{x\big[2(t_1-t_0)\big]^\alpha}{\Gamma(\alpha)}\int_{0}^{\frac{t-t_0}{2(t_1-t_0)}}{\left(\left[\dfrac{t-t_0}{2(t_1-t_0)}\right]-w\right)^{\alpha-1}w^{-1}\left(\ln{w^{-1}}\right)^{-\beta}}\,dw,\end{multline*}
for almost every $t\in[t_0,t_1]$. Therefore we have
\begin{multline*}\|J_{t_0,t}^\alpha f(t)\|_X\geq\left(\dfrac{\big[2(t_1-t_0)\big]^\alpha}{\Gamma(\alpha)}\right)\left(\dfrac{t-t_0}{2(t_1-t_0)}\right)^{\alpha-1}\int_{0}^{\frac{t-t_0}{2(t_1-t_0)}}w^{-1}\left(\ln{w^{-1}}\right)^{-\beta}\,dw\\
\hspace*{3cm}\stackrel{u=\ln{w^{-1}}}=\left(\dfrac{2(t_1-t_0)(t-t_0)^{\alpha-1}}{\Gamma(\alpha)}\right)\int_{\ln\left[2(t_1-t_0)/(t-t_0)\right]}^{\infty}u^{-\beta}\,du
\\=\dfrac{2(t_1-t_0)(t-t_0)^{\alpha-1}\left\{\ln\left[2(t_1-t_0)/(t-t_0)\right]\right\}^{1-\beta}}{\Gamma(\alpha)(\beta-1)},\end{multline*}
for almost every $t\in[t_0,t_1]$. Thus,
\begin{multline*}\|J_{t_0,t}^\alpha f\|^{1/(1-\alpha)}_{L^{1/(1-\alpha)}(t_0,t_1;X)}\\
\geq\left[\dfrac{2(t_1-t_0)}{\Gamma(\alpha)(\beta-1)}\right]^{1/(1-\alpha)}\int_{t_0}^{t_1}(s-t_0)^{-1}\big\{\ln\left[2(t_1-t_0)/(s-t_0)\right]\big\}^{(1-\beta)/(1-\alpha)}\,ds
\\\stackrel{u=\ln\left[2(t_1-t_0)/(s-t_0)\right]}=\left[\dfrac{2(t_1-t_0)}{\Gamma(\alpha)(\beta-1)}\right]^{1/(1-\alpha)}\int_{\ln{2}}^{\infty}{u^{(1-\beta)/(1-\alpha)}}\,du=\infty.\end{multline*}
In other words, $J_{0,t}^\alpha f(t)$ does not belongs to $L^{1/(1-\alpha)}\big(t_0,t_1;X\big)$.

$(iii)$ At first, assume that $\eta\in[1/(1-\alpha),\infty)$. Let us follow the steps of item $(ii)$. To this end, choose ${\beta_\eta}\in(1,1+(1/\eta))$, $x\in X$, with $\|x\|_X=1$, $t_1>t_0$ and consider $f_\eta:(t_0,\infty)\rightarrow X$ given by
$$f_\eta(t)=\left\{\begin{array}{ll}\{(t-t_0)/[2(t_1-t_0)]\}^{-1}\left\{\log{\big[2(t_1-t_0)/(t-t_0)\big]}\right\}^{-{\beta_\eta}}x,&\textrm{if }t\in(t_0,t_1],\vspace*{0.2cm}\\
0,&\textrm{if }t\in(t_1,\infty).\end{array}\right.$$

Note that $f_\eta\in L^{1}\big(t_0,\infty;X\big)$, since $1+(1/\eta)<2-\alpha$ and inequality \eqref{limitadolim} ensures that
$$\|f_\eta\|_{L^{1}(t_0,\infty;X)}=\|f_\eta\|_{L^{1}(t_0,t_1;X)}< \infty.$$
On the other hand, observe that
\begin{multline*}\|J_{t_0,t}^\alpha f_\eta\|^{\eta}_{L^{\eta}(t_0,\infty;X)}\geq \|J_{t_0,t}^\alpha f_\eta\|^{\eta}_{L^{\eta}(t_0,t_1;X)}\\
\geq\left[\dfrac{2(t_1-t_0)}{\Gamma(\alpha)({\beta_\eta}-1)}\right]^{\eta}\int_{t_0}^{t_1}(s-t_0)^{(\alpha-1)\eta}\big\{\log\left[2(t_1-t_0)/(s-t_0)\right]\big\}^{(1-{\beta_\eta})\eta}\,ds
\\\stackrel{u=\log\left[2(t_1-t_0)/(s-t_0)\right]}=\left\{\dfrac{[2(t_1-t_0)]^{\alpha\eta+1}}{[\Gamma(\alpha)({\beta_\eta}-1)]^\eta}\right\}\int_{\log{2}}^{\infty}{e^{[(1-\alpha)\eta-1]u}u^{(1-{\beta_\eta})\eta}}\,du=\infty.\end{multline*}

If $\eta=\infty$, choose ${\beta_\eta}\in(\alpha,1)$, $x\in X$, with $\|x\|_X=1$, $t_1>t_0$ and $f_\eta:(t_0,\infty)\rightarrow X$ given by
$$f_\eta(t)=\left\{\begin{array}{ll}(t-t_0)^{-{\beta_\eta}}x,&\textrm{if }t\in(t_0,t_1],\vspace*{0.2cm}\\
0,&\textrm{if }t\in(t_1,\infty).\end{array}\right.$$
Then $f_\eta\in L^1(t_0,\infty;X)$, however
$$J_{t_0,t}^\alpha f_{\eta}(t)=\left\{\begin{array}{ll}\dfrac{\Gamma(1-{\beta_\eta})}{\Gamma(1+\alpha-{\beta_\eta})}(t-t_0)^{\alpha-{\beta_\eta}}x,&\textrm{if }t\in[t_0,t_1],\vspace*{0.3cm}\\
\dfrac{x}{\Gamma(\alpha)}\displaystyle\int_{t_0}^{t_0+1}(t-s)^{\alpha-1}(s-t_0)^{-{\beta_\eta}}\,ds,&\textrm{if }t\in(t_1,\infty),\end{array}\right.$$
does not belongs to $L^\infty(t_0,\infty;X)$.

Finally, if $\eta\in[1,1/(1-\alpha))$, consider ${\beta_\eta}\in\big(1,\alpha+(1/\eta)\big)$, $x\in X$ and define function $f_\eta:[t_0,\infty)\rightarrow X$ by
$$f_\eta(t)=\left\{\begin{array}{ll}0,&\textrm{if }t\in[t_0,t_1],\vspace*{0.2cm}\\
(t-t_0)^{-{\beta_\eta}}x,&\textrm{if }t\in(t_0+1,\infty).\end{array}\right.$$
By the same arguments in the proof of item $(ii)$ of Theorem \ref{maxbigg}, we have $f_\eta\in L^1(t_0,\infty;X)$ and $J_{t_0,t}^\alpha f_\eta(t)\notin L^\eta(t_0,\infty;X)$.

\end{proof}

\section{Compactness of RL Fractional Integral}
\label{compactnessofJ}

In this section we address the compactness of the RL fractional integral. More precisely, we consider $p>1$, $\alpha\in(0,1/p)$ and $q\in\big[1,p/(1-p\alpha)\big)$ in order to prove that the bounded operator
$$J_{t_0,t}^\alpha:L^p(t_0,t_1;X)\rightarrow L^{q}(t_0,t_1;X)$$
is compact. It worths to emphasize that:
\begin{itemize}
\item[(i)] When $q=p/(1-p\alpha)$ the compactness fails to hold. In fact, if we consider $v\in X$ and the sequence of functions $\{f_j(t)\}_{j=1}^\infty\subset L^p(t_0,t_1;X)$ given by
    $$f_j(t)=\left\{\begin{array}{ll}j^{1/p}v,&\textrm{if }t\in\big[t_0,t_0+[(t_1-t_0)/j]\big],\vspace*{0.2cm}\\
    0,&\textrm{if }t\in\big(t_0+[(t_1-t_0)/j],t_1\big],\end{array}\right.$$
    then $\|f_j\|_{L^p(t_0,t_1;X)}=(t_1-t_0)\|v\|_X$, for every $j\in\mathbb{N}$, and for $m,n\in\mathbb{N}$ with $n>m$ we deduce that
    $$\|J_{t_0,t}^\alpha f_n-J_{t_0,t}^\alpha f_m\|_{L^{p/(1-p\alpha)}(t_0,t_1;X)}\geq\left[1-\left(\dfrac{m}{n}\right)^{1/p}\right]\dfrac{(t_1-t_0)^{1/p}}{\Gamma(\alpha+1)}.
    $$
In other words, we obtained that $\{f_j(t):j\in\mathbb{N}\}$ is bounded in $L^p(t_0,t_1;X)$ and $\{J_{t_0,t}^\alpha f_j(t):j\in\mathbb{N}\}$ does not have a cauchy subsequence in $L^{p/(1-p\alpha)}(t_0,t_1;X)$. Therefore, $\{J_{t_0,t}^\alpha f_j(t):j\in\mathbb{N}\}$ is not relatively compact in $L^{p/(1-p\alpha)}(t_0,t_1;X)$, what implies that $J_{t_0,t}^\alpha$ is not a compact operator. The source of this counter example is \cite[Remark 4.3.1]{GoVe1}.

\item[(ii)] Also, with a slight adaptation, we may obtain the same conclusion when $J_{t_0,t}^\alpha$ is viewed as a bounded operator from $L^p(t_0,\infty;X)$ into $L^{p/(1-p\alpha)}(t_0,\infty;X)$.
\end{itemize}

Here we prove our results about the compactness of RL fractional integral. Firstly, let us present some auxiliary notions and results.
\begin{definition} Assume that $\alpha\in(0,1)$ and $f:[t_0,t_1]\rightarrow \mathbb{R}$ is a given function.
\begin{itemize}
\item[(i)] The Riemann-Liouville (RL for short) fractional derivative of order $\alpha$ at $t_0$ of $f$ is defined by
\begin{equation}\label{fracinit00}D_{t_0,t}^\alpha f(t):=\dfrac{d}{dt}\big[J^{1-\alpha}_{t_0,t}f(t)\big],\end{equation}
for every $t\in [t_0,t_1]$, such that \eqref{fracinit00} exists. \vspace*{0.2cm}
\item[(ii)] The Caputo fractional derivative of order $\alpha$ at $t_0$ of $f$ is defined by
\begin{equation}\label{fracinit001}cD_{t_0,t}^\alpha f(t):=D_{t_0,t}^\alpha \big[f(t)-f(t_0)\big],\end{equation}
for every $t\in [t_0,t_1]$, such that \eqref{fracinit001} exists.
\end{itemize}
\end{definition}

\begin{remark}\label{diffcaputo} If $f:[t_0,t_1]\rightarrow \mathbb{R}$ is continuously differentiable in $[t_0,t_1]$, then $cD_{t_0,t}^\alpha f(t)$ is continuous in $[t_0,t_1]$ and it holds that
$$cD_{t_0,t}^\alpha f(t)=J^{1-\alpha}_{t_0,t}f^\prime(t),$$
for every $t\in[t_0,t_1]$. This fact is a classical consequence of differentiation under the integral sign.
\end{remark}

Now we recall Diethelm's mean value theorem with Caputo fractional derivative to real functions, which was proved in \cite{Die1,Die2}.

\begin{theorem}[Fractional Mean Value Theorem]\label{dithelmteo} Let $\alpha\in(0,1)$ and consider a continuous function $f:[t_0,t_1]\rightarrow\mathbb{R}$. If $cD_{t_0,t}^\alpha f(t)$ is also continuous in $[t_0,t_1]$, then there exists some $\xi\in(t_0,t_1)$ such that
$$\dfrac{f(t_1)-f(t_0)}{(t_1-t_0)^\alpha}=\left.\dfrac{cD_{t_0,t}^\alpha f(t)}{\Gamma(1+\alpha)}\right|_{t=\xi}.$$
\end{theorem}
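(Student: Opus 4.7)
The plan is to deduce the statement by combining the fractional fundamental theorem of calculus with the weighted integral mean value theorem. Writing $g(t):=cD^\alpha_{t_0,t}f(t)$, the desired identity is equivalent to exhibiting $\xi\in(t_0,t_1)$ with $g(\xi)=\Gamma(1+\alpha)(t_1-t_0)^{-\alpha}[f(t_1)-f(t_0)]$, which suggests first producing an integral representation of $f(t_1)-f(t_0)$ in terms of $g$ and then invoking a mean value argument against the nonnegative weight $(t_1-s)^{\alpha-1}$.

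First I would establish the fractional fundamental theorem
\[
f(t)-f(t_0)\;=\;J^\alpha_{t_0,t}g(t),\qquad t\in[t_0,t_1].
\]
By the definition of the Caputo derivative, the function $G(t):=J^{1-\alpha}_{t_0,t}[f(t)-f(t_0)]$ is differentiable on $[t_0,t_1]$ with $G'(t)=g(t)$, and the hypothesis $g\in C([t_0,t_1];\mathbb{R})$ then gives $G\in C^1([t_0,t_1];\mathbb{R})$. A short computation using the substitution $s=t_0+u(t-t_0)$ combined with continuity of $f$ shows that $G(t)\to 0$ as $t\to t_0^+$, so $G(t)=\int_{t_0}^t g(s)\,ds=J^1_{t_0,t}g(t)$. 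Applying $J^\alpha_{t_0,t}$ to both sides and using the semigroup identity $J^\alpha J^\beta=J^{\alpha+\beta}$ (valid on $L^1$), the left-hand side becomes $J^1_{t_0,t}[f(t)-f(t_0)]$ while the right-hand side becomes $J^1_{t_0,t}\bigl[J^\alpha_{t_0,t}g(t)\bigr]$. Subtracting and invoking injectivity of $J^1_{t_0,t}$ on continuous functions (differentiate), the claimed representation follows.

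Setting $t=t_1$, the representation reads
\[
f(t_1)-f(t_0)=\frac{1}{\Gamma(\alpha)}\int_{t_0}^{t_1}(t_1-s)^{\alpha-1}g(s)\,ds.
\]
Since $g$ is continuous on $[t_0,t_1]$ and the weight $(t_1-s)^{\alpha-1}$ is strictly positive and integrable on $(t_0,t_1)$, the weighted mean value theorem for Riemann integrals yields $\xi\in(t_0,t_1)$ with
\[
\int_{t_0}^{t_1}(t_1-s)^{\alpha-1}g(s)\,ds=g(\xi)\int_{t_0}^{t_1}(t_1-s)^{\alpha-1}\,ds=\frac{g(\xi)(t_1-t_0)^\alpha}{\alpha}.
\]
Combining the two previous displays and using $\alpha\Gamma(\alpha)=\Gamma(1+\alpha)$, division by $(t_1-t_0)^\alpha$ produces the claimed identity.

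The main obstacle is the fractional fundamental theorem itself: the standard version is usually stated under absolute continuity of $f$, while the hypothesis here is only continuity of $f$ and of $cD^\alpha_{t_0,t}f$. The key observation that removes this difficulty is that continuity of $g=cD^\alpha_{t_0,t}f$ forces $G=J^{1-\alpha}_{t_0,t}[f-f(t_0)]$ to be $C^1$ on $[t_0,t_1]$ with $G(t_0)=0$, whence $G=J^1g$; the semigroup property then transfers this identity into $f-f(t_0)=J^\alpha g$ without any further regularity assumption on $f$.
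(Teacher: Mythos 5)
The paper does not actually prove this statement -- it is recalled from Diethelm \cite{Die1,Die2} -- and your argument is correct and is essentially Diethelm's own proof: first the representation $f(t)-f(t_0)=J^{\alpha}_{t_0,t}\,cD^{\alpha}_{t_0,t}f(t)$, then the weighted mean value theorem for integrals with the positive weight $(t_1-s)^{\alpha-1}$, together with $\alpha\Gamma(\alpha)=\Gamma(1+\alpha)$. In particular, your derivation of that representation under only continuity of $f$ and of $cD^{\alpha}_{t_0,t}f$ (namely $G=J^{1-\alpha}_{t_0,t}[f-f(t_0)]\in C^1$ with $G(t_0)=0$, hence $G=J^{1}_{t_0,t}g$, followed by the semigroup property and cancellation of $J^{1}_{t_0,t}$ on continuous functions) is precisely the regularity issue that Diethelm's erratum was written to fix, and you handle it correctly; the only micro-gap is the parenthetical claim that $\xi$ lies in the \emph{open} interval, which needs the extra (easy) observation that, since the weight is strictly positive, the weighted average of $g$ can equal $\max g$ or $\min g$ only when $g$ is constant.
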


To facilitate the proof of our forward theorems, let us address the following corollary, which is a natural consequence of Theorem \ref{dithelmteo}.

\begin{corollary}\label{meancapu} Consider $0<\beta<\alpha<1$ and $l,x>0$. Then there exists $\xi_x\in(0,l)$ and a constant $c_{\alpha,\beta}>0$ such that
$$\Big|(l+x)^{\alpha-1}-{x}^{\alpha-1}\Big|=l^\beta (\xi_x+x)^{\alpha-\beta-1}c_{\alpha,\beta}\int_{x/(\xi_x+x)}^1(1-w)^{-\beta}w^{\alpha-2}dw$$
\end{corollary}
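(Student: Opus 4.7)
The plan is to apply the fractional mean value theorem (Theorem \ref{dithelmteo}) at order $\beta$ (rather than $\alpha$) to the auxiliary function $f : [0, l] \to \mathbb{R}$ given by $f(t) = (t+x)^{\alpha - 1}$. Since $x > 0$, $f$ is continuously differentiable on $[0, l]$ with $f'(t) = (\alpha - 1)(t+x)^{\alpha - 2}$, so Remark \ref{diffcaputo} guarantees that $cD_{0,t}^\beta f(t) = J_{0,t}^{1-\beta} f'(t)$ is also continuous on $[0, l]$. The hypotheses of Theorem \ref{dithelmteo} are thereby satisfied.

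Applying the theorem with $t_0 = 0$, $t_1 = l$ and order $\beta$ produces some $\xi_x \in (0, l)$ with
$$(l + x)^{\alpha - 1} - x^{\alpha - 1} = \frac{l^\beta}{\Gamma(1 + \beta)} \, cD_{0, t}^\beta f(t)\Big|_{t = \xi_x} = \frac{(\alpha - 1) l^\beta}{\Gamma(1 + \beta)\,\Gamma(1 - \beta)} \int_0^{\xi_x} (\xi_x - s)^{-\beta} (s + x)^{\alpha - 2} \, ds.$$
Since $\alpha < 1$, the map $t \mapsto t^{\alpha - 1}$ is strictly decreasing, so the left-hand side is negative; taking absolute values simply replaces the factor $\alpha - 1$ on the right with $1 - \alpha$.

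It remains to recast the integral in the form advertised. Performing the change of variables $w = (s + x)/(\xi_x + x)$, one finds $\xi_x - s = (\xi_x + x)(1 - w)$, $s + x = w(\xi_x + x)$ and $ds = (\xi_x + x)\,dw$, while the limits transform into $x/(\xi_x + x)$ and $1$. Collecting the resulting powers of $(\xi_x + x)$ outside the integral yields the factor $(\xi_x + x)^{\alpha - \beta - 1}$, so that one reads off the constant $c_{\alpha, \beta} = (1 - \alpha)/[\Gamma(1+\beta)\,\Gamma(1-\beta)]$, which is strictly positive and depends only on $\alpha$ and $\beta$, as required.

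The only (minor) obstacle is conceptual: one must recognize that the fractional mean value theorem has to be invoked at the auxiliary order $\beta$ (the smoothness parameter being introduced in order to split the singularity $(\xi_x + x)^{\alpha - \beta - 1}$ off from the integral), rather than at the order $\alpha$ suggested by the expression $(l + x)^{\alpha - 1} - x^{\alpha - 1}$; and that the change of variables $w = (s + x)/(\xi_x + x)$ is precisely the one that homogenizes the integrand with respect to $\xi_x + x$. Once these two observations are in place the argument is a direct substitution.
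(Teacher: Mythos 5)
Your proof is correct and follows essentially the same route as the paper: apply Theorem \ref{dithelmteo} at order $\beta$ to $\phi_x(t)=(t+x)^{\alpha-1}$ on $[0,l]$, use Remark \ref{diffcaputo} to write $cD_{0,t}^\beta\phi_x=J^{1-\beta}_{0,t}\phi_x'$, and perform the substitution $w=(s+x)/(\xi_x+x)$, arriving at the same constant $c_{\alpha,\beta}=(1-\alpha)/[\Gamma(1+\beta)\Gamma(1-\beta)]$ as in the paper's argument.
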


\begin{proof} Consider function $\phi_x:[0,l]\rightarrow\mathbb{R}$ given by $\phi_x(t)=(t+x)^{\alpha-1}$. Since $\phi_x(t)$ is continuously differentiable in $[0,l]$, Remark \ref{diffcaputo} ensures that $cD_{0,t}^\beta\phi_x(t)$ is continuous in $[0,l]$ and that
\begin{multline*}cD_{0,t}^\beta\phi_x(t)=J^{1-\beta}_{t_0,t}\big[\phi_x(t)\big]^\prime=\dfrac{(\alpha-1)}{\Gamma(1-\beta)}\int_0^t(t-s)^{-\beta}(s+x)^{\alpha-2}ds\\
\stackrel[]{s=(t+x)w-x}{=}\dfrac{(\alpha-1)(t+x)^{\alpha-\beta-1}}{\Gamma(1-\beta)}\int_{x/(t+x)}^1(1-w)^{-\beta}w^{\alpha-2}dw,\end{multline*}
for every $t\in[0,l]$. But then, Theorem \ref{dithelmteo} ensures the existence of $\xi_x\in(0,l)$ such that
$$\Big|(l+x)^{\alpha-1}-{x}^{\alpha-1}\Big|=|\phi_x(l)-\phi_x(0)|=\left.\dfrac{l^\beta \big|cD_{0,t}^\beta \phi_x(t)\big|}{\Gamma(1+\beta)}\right|_{t=\xi_x},$$
and therefore
$$\Big|(l+x)^{\alpha-1}-{x}^{\alpha-1}\Big|=\dfrac{l^\beta(1-\alpha)(\xi_x+x)^{\alpha-\beta-1}}{\Gamma(1+\beta)\Gamma(1-\beta)}\int_{x/(\xi_x+x)}^1(1-w)^{-\beta}w^{\alpha-2}dw.$$

\end{proof}

Now we recall the classical Interpolation Theorem (see \cite[Theorem 5.1.1]{Ber1}).

\begin{theorem}[Interpolation Theorem]\label{interpolation} Let $1\leq p_1<p<p_2<\infty$ and assume that $f\in L^{p_1}(I;\mathbb{R})\cap L^{p_2}(I;\mathbb{R})$. Then $f\in L^{p}(I;\mathbb{R})$ with the following estimate holding
$$\|f\|_{L^{p}(I;\mathbb{R})}\leq\|f\|^{1-\theta}_{L^{p_1}(I;\mathbb{R})}\|f\|^\theta_{L^{p_2}(I;\mathbb{R})},$$
where $\theta\in(0,1)$ satisfies the identity
$$\dfrac{1}{p}=\dfrac{1-\theta}{p_1}+\dfrac{\theta}{p_2}.$$
\end{theorem}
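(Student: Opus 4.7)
The plan is to derive this as a direct consequence of Hölder's inequality. The key observation is that the defining relation for $\theta$, namely
$$\dfrac{1}{p}=\dfrac{1-\theta}{p_1}+\dfrac{\theta}{p_2},$$
can be rewritten, after multiplying by $p$, as
$$1 = \dfrac{p(1-\theta)}{p_1} + \dfrac{p\theta}{p_2},$$
which says that the numbers $p_1/[p(1-\theta)]$ and $p_2/(p\theta)$ are Hölder conjugate exponents (both strictly larger than $1$, since $p_1<p<p_2$ and $\theta\in(0,1)$).

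The first step is to split the integrand by writing $|f(t)|^{p} = |f(t)|^{p(1-\theta)} \cdot |f(t)|^{p\theta}$ for almost every $t\in I$. Since $f\in L^{p_1}(I;\mathbb{R})\cap L^{p_2}(I;\mathbb{R})$, the function $|f|^{p(1-\theta)}$ belongs to $L^{p_1/[p(1-\theta)]}(I;\mathbb{R})$ with
$$\bigl\||f|^{p(1-\theta)}\bigr\|_{L^{p_1/[p(1-\theta)]}(I;\mathbb{R})} = \|f\|_{L^{p_1}(I;\mathbb{R})}^{p(1-\theta)},$$
and analogously $|f|^{p\theta}\in L^{p_2/(p\theta)}(I;\mathbb{R})$ with norm $\|f\|_{L^{p_2}(I;\mathbb{R})}^{p\theta}$. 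Applying Hölder's inequality to the product then gives
$$\int_{I}|f(s)|^{p}\,ds \leq \|f\|_{L^{p_1}(I;\mathbb{R})}^{p(1-\theta)}\,\|f\|_{L^{p_2}(I;\mathbb{R})}^{p\theta},$$
and in particular $f\in L^p(I;\mathbb{R})$. Taking the $p$-th root of both sides yields exactly the claimed estimate
$$\|f\|_{L^{p}(I;\mathbb{R})}\leq\|f\|^{1-\theta}_{L^{p_1}(I;\mathbb{R})}\|f\|^\theta_{L^{p_2}(I;\mathbb{R})}.$$

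There is no real obstacle here: the only nontrivial verification is that the chosen pair of Hölder exponents is conjugate, and this is literally the hypothesis on $\theta$. The whole argument is essentially two lines once the splitting $|f|^{p} = |f|^{p(1-\theta)}|f|^{p\theta}$ is written down, so I expect the author's proof (or the reference to \cite[Theorem 5.1.1]{Ber1}) to follow the same template.
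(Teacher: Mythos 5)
Your proof is correct: the identity $1=p(1-\theta)/p_1+p\theta/p_2$ makes $p_1/[p(1-\theta)]$ and $p_2/(p\theta)$ a conjugate pair (both exceed $1$, as each reciprocal lies in $(0,1)$), and Hölder applied to the splitting $|f|^p=|f|^{p(1-\theta)}|f|^{p\theta}$ gives exactly $\|f\|_{L^p(I;\mathbb{R})}^p\leq\|f\|_{L^{p_1}(I;\mathbb{R})}^{p(1-\theta)}\|f\|_{L^{p_2}(I;\mathbb{R})}^{p\theta}$. The paper does not prove this statement at all — it only cites \cite[Theorem 5.1.1]{Ber1} — and your argument is the standard elementary (log-convexity) proof of that cited result, so there is nothing in the paper to diverge from.
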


Let us present the classical result of Simon about compactness in Bochner-Lebesgue spaces (see \cite[Theorem 1]{Si1} for details).

\begin{theorem}[Simon Compactness Theorem]\label{simon} Let $1\leq p<\infty$. A set of functions $F\subset L^p(t_0,t_1;X)$ is relatively compact in $L^p(t_0,t_1;X)$ if, and only if:\vspace*{0.2cm}
\begin{itemize}
\item[(i)] $\left\{\int_{t_0^*}^{t_1^*}f(t)dt\,:\,f\in F\right\}$ is relatively compact in $X$, for every $t_0<t_0^*<t_1^*<t_1$;\vspace*{0.2cm}
\item[(ii)] $\lim_{h\rightarrow 0^+}\left[\sup_{f\in F}\left(\int_{t_0}^{t_1-h}\|f(t+h)-f(t)\|_X^p\right)^{1/p}\right]=0$.
\end{itemize}
\end{theorem}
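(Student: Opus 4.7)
The result is the Banach-space valued analogue of the Kolmogorov-Riesz-Fr\'echet compactness criterion, and the proof naturally splits along the biconditional.

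For necessity, assume $F$ is relatively compact (equivalently totally bounded). Condition (i) is essentially automatic: the functional $T_{t_0^*,t_1^*}\colon L^p(t_0,t_1;X)\to X$ defined by $f\mapsto \int_{t_0^*}^{t_1^*} f(t)\,dt$ is bounded by H\"older's inequality, and continuous linear maps preserve relative compactness. Condition (ii) follows from a standard $\epsilon$-net argument: cover $F$ with finitely many $L^p$-balls of radius $\epsilon/3$ centered at $f_1,\ldots,f_N$; approximate each $f_j$ by a compactly supported continuous function $\varphi_j\in C_c((t_0,t_1);X)$ satisfying $\|f_j-\varphi_j\|_{L^p}<\epsilon/3$ by density; then exploit uniform continuity of each $\varphi_j$ to produce a common threshold $h_0>0$ below which $\|\varphi_j(\cdot+h)-\varphi_j(\cdot)\|_{L^p}<\epsilon/3$.

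For sufficiency, the central tool is the Steklov average
\[
f_h(t) \;:=\; \frac{1}{h}\int_{t}^{t+h} f(s)\,ds, \qquad t\in[t_0,t_1-h],\; h>0.
\]
My plan is: (a) For each fixed $h$, show that $\{f_h:f\in F\}$ is equi-continuous and pointwise relatively compact in $X$; pointwise compactness comes from (i) applied with $t_0^*=t$, $t_1^*=t+h$, while equi-continuity of $t\mapsto f_h(t)$ follows by writing $f_h(t)-f_h(t')$ as an integral of translates and invoking condition (ii). Arzel\`a-Ascoli then yields relative compactness in $C([t_0,t_1-h];X)$, hence in $L^p([t_0,t_1-h];X)$. (b) From the identity $f(t)-f_h(t)=\tfrac{1}{h}\int_{0}^{h}[f(t)-f(t+s)]\,ds$ combined with Minkowski's integral inequality and (ii), deduce $\sup_{f\in F}\|f-f_h\|_{L^p([t_0,t_1-h];X)}\to 0$ as $h\to 0^+$. (c) Conclude that $F$ is totally bounded in $L^p(t_0,t_1;X)$ by an $\epsilon/2$-approximation argument combining (a) and (b), and appeal to completeness.

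The main obstacle I anticipate is the boundary effect at $t_1$: since $f_h$ is defined only on $[t_0,t_1-h]$, step (b) controls only the corresponding portion of the $L^p$-norm, and the residual slice $[t_1-h,t_1]$ must be handled separately. Natural remedies are to extend $f$ by zero beyond $t_1$ and show the tail integral $\int_{t_1-h}^{t_1}\|f\|_X^p\,dt$ is uniformly small over $F$, or to use a left-sided Steklov average in a neighborhood of $t_1$. A secondary subtlety is deriving $L^p$-boundedness of $F$ from (i) and (ii) alone; this is needed both for the tail estimate above and for the Arzel\`a-Ascoli step, and can be accomplished by partitioning $[t_0,t_1]$ into finitely many subintervals, controlling mean values via (i) and intra-cell oscillations via (ii).
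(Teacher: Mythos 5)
The paper offers no proof of this statement at all: it is quoted from Simon and used as a black box (the citation \cite[Theorem 1]{Si1} is the proof), so the only meaningful comparison is with Simon's original argument, which also runs through the mean functions $f_h(t)=h^{-1}\int_t^{t+h}f(s)\,ds$ and an Ascoli-type compactness argument. Your plan is therefore the classical route rather than a new one, and its core is sound. The necessity half is fine (boundedness of $f\mapsto\int_{t_0^*}^{t_1^*}f(t)\,dt$, density of continuous compactly supported functions in $L^p(t_0,t_1;X)$, an $\epsilon$-net plus uniform continuity for (ii)). In the sufficiency half, steps (a) and (b) work as you describe, with one small repair: condition (i) is only assumed for $t_0<t_0^*<t_1^*<t_1$, so you cannot apply it with $t_0^*=t_0$; pointwise relative compactness of $\{f_h(t):f\in F\}$ at $t=t_0$ (and at $t=t_1-h$) must be recovered from relative compactness on a dense set of interior points together with the equicontinuity you establish from (ii).

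The genuine gap is the one you flag yourself: as written, step (c) only yields total boundedness of $F$ in $L^p(t_0,t_1-h;X)$, and the slice $[t_1-h,t_1]$ is not handled. Of your two proposed remedies, be aware that the first (extend by zero and claim $\sup_{f\in F}\int_{t_1-h}^{t_1}\|f(t)\|_X^p\,dt\to0$) is not automatic from (i)--(ii); it is true, but proving it requires an additional argument (e.g.\ averaging the translation parameter over $[H,2H]$ and using uniform integrability on a fixed interior interval), so leaving it as a remark would leave the proof incomplete. The second remedy closes the gap cleanly and I would adopt it: introduce the backward averages $g_h(t)=h^{-1}\int_{t-h}^{t}f(s)\,ds$ on $[t_0+h,t_1]$. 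For $0<s\le h$ the change of variable $\tau=t-s$ gives $\int_{t_0+h}^{t_1}\|f(t)-f(t-s)\|_X^p\,dt\le\int_{t_0}^{t_1-s}\|f(\tau+s)-f(\tau)\|_X^p\,d\tau$, so condition (ii) controls $\sup_{f\in F}\|f-g_h\|_{L^p(t_0+h,t_1;X)}$, and the same Ascoli argument as in (a) shows $\{g_h:f\in F\}$ is relatively compact in $C([t_0+h,t_1];X)$. Approximating $f$ by $f_h\chi_{[t_0,t_1-h]}+g_h\chi_{(t_1-h,t_1]}$ with $h<(t_1-t_0)/2$ then gives total boundedness of $F$ on all of $[t_0,t_1]$; this also makes your ``secondary subtlety'' (deriving $L^p$-boundedness of $F$ beforehand) unnecessary, since the approximants lie in relatively compact, hence bounded, sets.
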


Bellow we present an intricate implication of the classical Minkowski's inequality to integrals (see \cite[Theorem 202]{HaLiPo1}), which we find important to give details of its proof.

\begin{lemma}\label{minkowski2} Assume that $f:[t_0,t_1]\times[t_0,t_1]\rightarrow\mathbb{R}$ is a Lebesgue measurable function and that $1\leq p<\infty$. If $h\in\big(0,(t_1-t_0)/2\big)$, then
\begin{multline*}\left[\int_{t_0}^{t_1-h}{\left|\int_{t}^{t+h}{f(t,s)}\,ds\right|^p}\,dt\right]^{1/p}\leq
\int_{t_0}^{t_0+h}\left[\int_{t_0}^{s}|{f}(t,s)|^p\,dt\right]^{1/p}\,ds\\
+\int_{t_0+h}^{t_1-h}\left[\int_{s-h}^{s}|{f}(t,s)|^p\,dt\right]^{1/p}\,ds
+\int_{t_1-h}^{t_1}\left[\int_{s-h}^{t_1-h}|{f}(t,s)|^p\,dt\right]^{1/p}\,ds. \end{multline*}
\end{lemma}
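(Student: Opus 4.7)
The plan is to apply the classical form of Minkowski's integral inequality after rewriting the inner $s$-integral as an integral over the full interval $[t_0,t_1]$ by means of an indicator function. Let
\[
A=\big\{(t,s)\in[t_0,t_1-h]\times[t_0,t_1]\,:\,t\leq s\leq t+h\big\},
\]
so that for every $t\in[t_0,t_1-h]$ one may write
\[
\int_{t}^{t+h} f(t,s)\,ds=\int_{t_0}^{t_1} f(t,s)\,\chi_{A}(t,s)\,ds.
\]
Then the classical Minkowski integral inequality (\cite[Theorem 202]{HaLiPo1}) applied to the function $(t,s)\mapsto f(t,s)\chi_{A}(t,s)$ yields
\[
\left[\int_{t_0}^{t_1-h}\left|\int_{t_0}^{t_1} f(t,s)\chi_{A}(t,s)\,ds\right|^{p}\,dt\right]^{1/p}\leq \int_{t_0}^{t_1}\left[\int_{t_0}^{t_1-h} |f(t,s)|^{p}\chi_{A}(t,s)\,dt\right]^{1/p}\,ds.
\]

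The remaining task is to identify, for each $s\in[t_0,t_1]$, the $t$-section of $A$. Using the assumption $h<(t_1-t_0)/2$ (which guarantees $t_0+h<t_1-h$, so the decomposition below is nontrivial), a direct inspection of the inequalities $t_0\leq t\leq t_1-h$ and $s-h\leq t\leq s$ shows that the $t$-section of $A$ is: $[t_0,s]$ when $s\in[t_0,t_0+h]$, $[s-h,s]$ when $s\in[t_0+h,t_1-h]$, and $[s-h,t_1-h]$ when $s\in[t_1-h,t_1]$. Splitting the outer $s$-integral accordingly and replacing $\chi_{A}(t,s)$ by the characteristic function of the corresponding $t$-interval produces exactly the three terms on the right-hand side of the claimed inequality.

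There is no real obstacle beyond bookkeeping: the only point that needs care is the correct description of the three $t$-sections of $A$, especially in the two ``boundary'' regions $s\in[t_0,t_0+h]$ and $s\in[t_1-h,t_1]$, where the endpoints are clipped by $t_0$ and $t_1-h$ respectively rather than by $s-h$ and $s$. Once this Fubini-type region analysis is done, Minkowski's integral inequality delivers the result in one line.
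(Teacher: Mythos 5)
Your proposal is correct and follows essentially the same route as the paper: multiplying by $\chi_A$ is exactly the paper's auxiliary function $\tilde{f}$, and both proofs then apply the classical Minkowski integral inequality over the full rectangle before reducing the inner $t$-integral to the three ranges $[t_0,s]$, $[s-h,s]$, $[s-h,t_1-h]$ according to whether $s$ lies in $[t_0,t_0+h]$, $[t_0+h,t_1-h]$ or $[t_1-h,t_1]$. Your direct identification of the $t$-sections of $A$ is a slightly cleaner bookkeeping of what the paper does by splitting the $s$-integral and discarding the pieces where $\tilde{f}$ vanishes, and your section computations (including the role of $h<(t_1-t_0)/2$) are accurate.
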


\begin{proof} Consider function $\tilde{f}:[t_0,t_1]\times[t_0,t_1]\rightarrow\mathbb{R}$ given by
$$\tilde{f}(t,s)=\left\{\begin{array}{ll}f(t,s),&\textrm{if }t\leq s\leq t+h,\vspace*{0.2cm}\\
0,&\textrm{otherwise}.\end{array}\right.$$

Then by the  classical Minkowski's inequality for integrals we have
\begin{multline}\label{finaldesiqul}\left[\int_{t_0}^{t_1-h}{\left|\int_{t}^{t+h}{f(t,s)}\,ds\right|^p}\,dt\right]^{1/p}=
\left[\int_{t_0}^{t_1-h}{\left|\int_{t_0}^{t_1}{\tilde{f}(t,s)}\,ds\right|^p}\,dt\right]^{1/p}\\
\leq
\int_{t_0}^{t_1}\left[\int_{t_0}^{t_1-h}|\tilde{f}(t,s)|^p\,dt\right]^{1/p}\,ds\leq
\int_{t_0}^{t_0+h}\left[\int_{t_0}^{t_1-h}|\tilde{f}(t,s)|^p\,dt\right]^{1/p}\,ds
\\+\int_{t_0+h}^{t_1-h}\left[\int_{t_0}^{t_1-h}|\tilde{f}(t,s)|^p\,dt\right]^{1/p}\,ds
+\int_{t_1-h}^{t_1}\left[\int_{t_0}^{t_1-h}|\tilde{f}(t,s)|^p\,dt\right]^{1/p}\,ds.\end{multline}

To complete the proof of this result, we need to reinterpret the integrals on the right side of the above inequality, as integrals of $f(t,s)$ instead of $\tilde{f}(t,s)$. In fact, Minkowski's inequality ensures that:
\begin{multline*}(i)\hspace*{0.2cm}\int_{t_0}^{t_0+h}\left[\int_{t_0}^{t_1-h}|\tilde{f}(t,s)|^p\,dt\right]^{1/p}\,ds
\leq\int_{t_0}^{t_0+h}\left[\int_{t_0}^{s}|\underbrace{\tilde{f}(t,s)}_{=f(t,s)}|^p\,dt\right]^{1/p}\,ds\\
+\int_{t_0}^{t_0+h}\left[\int_{s}^{t_1-h}|\underbrace{\tilde{f}(t,s)}_{=0}|^p\,dt\right]^{1/p}\,ds
=\int_{t_0}^{t_0+h}\left[\int_{t_0}^{s}|{f}(t,s)|^p\,dt\right]^{1/p}\,ds.\vspace*{0.1cm}\end{multline*}
\begin{multline*}(ii)\hspace*{0.2cm}\int_{t_0+h}^{t_1-h}\left[\int_{t_0}^{t_1-h}|\tilde{f}(t,s)|^p\,dt\right]^{1/p}\,ds
\leq\int_{t_0+h}^{t_1-h}\left[\int_{t_0}^{s-h}|\underbrace{\tilde{f}(t,s)}_{=0}|^p\,dt\right]^{1/p}\,ds\\
+\int_{t_0+h}^{t_1-h}\left[\int_{s-h}^{s}|\underbrace{\tilde{f}(t,s)}_{=f(t,s)}|^p\,dt\right]^{1/p}\,ds
+\int_{t_0+h}^{t_1-h}\left[\int_{s}^{t_1-h}|\underbrace{\tilde{f}(t,s)}_{=0}|^p\,dt\right]^{1/p}\,ds\\
=\int_{t_0+h}^{t_1-h}\left[\int_{s-h}^{s}|{f}(t,s)|^p\,dt\right]^{1/p}\,ds.\vspace*{0.1cm}\end{multline*}
\begin{multline*}(iii)\hspace*{0.2cm}\int_{t_1-h}^{t_1}\left[\int_{t_0}^{t_1-h}|\tilde{f}(t,s)|^p\,dt\right]^{1/p}\,ds
\leq\int_{t_1-h}^{t_1}\left[\int_{t_0}^{s-h}|\underbrace{\tilde{f}(t,s)}_{=0}|^p\,dt\right]^{1/p}\,ds\\
+\int_{t_1-h}^{t_1}\left[\int_{s-h}^{t_1-h}|\underbrace{\tilde{f}(t,s)}_{=f(t,s)}|^p\,dt\right]^{1/p}\,ds=\int_{t_1-h}^{t_1}\left[\int_{s-h}^{t_1-h}|{f}(t,s)|^p\,dt\right]^{1/p}\,ds.\vspace*{0.2cm}\end{multline*}

Inequality \eqref{finaldesiqul} together with itens $(i)$, $(ii)$ and $(iii)$ given above, complete the proof of the corollary.
\end{proof}

Finally we present a last important auxiliary result.

\begin{lemma}\label{compactlemma}
Let $p,q\in[1,\infty)$ and $\alpha>0$. If the operator
$$J_{t_0,t}^\alpha:L^p(t_0,t_1;X)\rightarrow L^{q}(t_0,t_1;X)$$
is compact, then for any bounded set $F\subset L^p(t_0,t_1;X)$ it holds that
\begin{equation}\label{novahip00}\left\{\begin{array}{l}\left\{J_{t_0,t_1^*}^{1+\alpha} f(t_1^*)-J_{t_0,t_0^*}^{1+\alpha} f(t_0^*)\,:\,f\in F\right\}\textrm{ is relatively compact in } X,\vspace*{0.2cm}\\
\textrm{for every }t_0<t_0^*<t_1^*<t_1.\end{array}\right.\end{equation}

\end{lemma}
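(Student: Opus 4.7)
The plan is to reduce the claim to part $(i)$ of Simon's compactness theorem (Theorem \ref{simon}) using the semigroup property of the Riemann-Liouville fractional integral. The key observation is the identity $J^{1+\alpha}_{t_0,t} = J^1_{t_0,t}\circ J^\alpha_{t_0,t}$, which for any $f\in L^p(t_0,t_1;X)$ gives
$$J^{1+\alpha}_{t_0,t}f(t) = \int_{t_0}^{t} J^\alpha_{t_0,s}f(s)\,ds, \qquad t\in[t_0,t_1].$$
Subtracting the value at $t_0^*$ from the value at $t_1^*$ yields
$$J^{1+\alpha}_{t_0,t_1^*}f(t_1^*) - J^{1+\alpha}_{t_0,t_0^*}f(t_0^*) = \int_{t_0^*}^{t_1^*} J^\alpha_{t_0,s}f(s)\,ds,$$
so the set in \eqref{novahip00} is exactly the set of integrals of the functions $J^\alpha_{t_0,\cdot}f$, with $f\in F$, over the fixed subinterval $[t_0^*,t_1^*]$.

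Next, I would invoke the compactness hypothesis: since $F$ is bounded in $L^p(t_0,t_1;X)$ and $J^\alpha_{t_0,t}:L^p(t_0,t_1;X)\to L^q(t_0,t_1;X)$ is compact, the image $G:=\{J^\alpha_{t_0,\cdot}f : f\in F\}$ is relatively compact in $L^q(t_0,t_1;X)$. Applying the necessity direction of Simon's theorem (condition $(i)$ of Theorem \ref{simon}) to $G$, one concludes that
$$\left\{\int_{t_0^*}^{t_1^*} g(s)\,ds : g\in G\right\}$$
is relatively compact in $X$ for every $t_0<t_0^*<t_1^*<t_1$. Combined with the identity displayed above, this is precisely the desired property \eqref{novahip00}.

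There is no serious obstacle: the proof is essentially a three-line argument once the semigroup identity for $J^{1+\alpha}$ is made explicit. The only point that deserves a quick justification is the interchange of $J^1$ and $J^\alpha$ — that is, that for $f\in L^p(t_0,t_1;X)$ one actually has $J^{1+\alpha}_{t_0,t}f(t)=\int_{t_0}^{t}J^\alpha_{t_0,s}f(s)\,ds$ pointwise (almost everywhere) — which follows from Fubini's theorem together with the Bochner integrability provided by Theorem \ref{minkowskiseq}. Everything else is a direct application of Simon's criterion.
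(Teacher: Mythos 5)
Your proposal is correct and follows essentially the same route as the paper: both rewrite $J^{1+\alpha}_{t_0,t_1^*}f(t_1^*)-J^{1+\alpha}_{t_0,t_0^*}f(t_0^*)$ via Fubini's theorem as $\int_{t_0^*}^{t_1^*}J^{\alpha}_{t_0,s}f(s)\,ds$ and then exploit that $J^{\alpha}_{t_0,t}(F)$ is relatively compact in $L^q(t_0,t_1;X)$. The only cosmetic difference lies in the last step: the paper notes directly that $g\mapsto\int_{t_0^*}^{t_1^*}g(s)\,ds$ is a bounded operator from $L^q(t_0,t_1;X)$ into $X$, so its composition with the compact operator $J^{\alpha}_{t_0,t}$ is compact, whereas you invoke the necessity direction of condition $(i)$ in Theorem \ref{simon}, which amounts to the same continuity fact.
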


It worths to emphasize that the proof of Lemma \ref{compactlemma} follows the same ideas presented in the case $q=p$, proved in \cite[Theorem 15]{CarFe0}. However, to facilitate the understanding of Theorems \ref{compactrieman00} and \ref{compactrieman01}, we prove it bellow.

\begin{proof1} Assume that $J_{t_0,t}^\alpha: L^p(t_0,t_1;X) \rightarrow L^q(t_0,t_1;X)$ is a compact operator, consider $t_0<t_0^*<t_1^*<t_1$ and define the linear operator %
$$\begin{array}{lclc}J_{t_0^*,t_1^*}:&L^q(t_0,t_1;X)&\rightarrow& X,\\
&f(s)&\mapsto& \int_{t_0^*}^{t_1^*}f(s)\,ds.\end{array}$$
Observe that $J_{t_0^*,t_1^*}$ is a bounded operator with
$$\left\|J_{t_0^*,t_1^*}\right\|_{\mathcal{L}(L^q(t_0,t_1;X),X)}\leq(t_1^*-t_0^*)^{1-(1/q)}.$$

Since we are assuming that $J_{t_0,t}^\alpha$ is compact, we deduce that
$$\begin{array}{lclc}J_{t_0^*,t_1^*}\circ J_{t_0,t}^\alpha:&L^p(t_0,t_1;X)&\rightarrow& X\\
&f(s)&\mapsto& \int_{t_0^*}^{t_1^*}J_{t_0,s}^\alpha f(s)\,ds.\end{array}$$
is a compact operator. However, notice that
\begin{multline*}\int_{t_0^*}^{t_1^*}J_{t_0,s}^\alpha f(s)\,ds=\dfrac{1}{\Gamma(\alpha)}\int_{t_0^*}^{t_1^*}\left[\int_{t_0}^s(s-w)^{\alpha-1} f(w)\,dw\right]\,ds\\
=\dfrac{1}{\Gamma(\alpha)}\int_{t_0^*}^{t_1^*}\left[\int_{t_0}^{t_0^*}(s-w)^{\alpha-1} f(w)\,dw\right]\,ds\\+\dfrac{1}{\Gamma(\alpha)}\int_{t_0^*}^{t_1^*}\left[\int_{t_0^*}^s(s-w)^{\alpha-1} f(w)\,dw\right]\,ds
\end{multline*}
and therefore the classical Fubini theorem of Bochner-Lebesgue functions \cite[Chapter X - Theorem
2]{Mik1} ensures that
\begin{multline*}\int_{t_0^*}^{t_1^*}J_{t_0,s}^\alpha f(s)\,ds=\dfrac{1}{\Gamma(\alpha)}\int_{t_0}^{t_0^*}\left[\int_{t_0^*}^{t_1^*}(s-w)^{\alpha-1} \,ds\right]f(w)\,dw\\+\dfrac{1}{\Gamma(\alpha)}\int_{t_0^*}^{t_1^*}\left[\int_{w}^{t_1^*}(s-w)^{\alpha-1}\,ds\right] f(w)\,dw,
\end{multline*}
what is equivalent to
$$\int_{t_0^*}^{t_1^*}J_{t_0,s}^\alpha f(s)\,ds=\dfrac{1}{\Gamma(\alpha+1)}\int_{t_0}^{t_1^*}(t_1^*-w)^{\alpha}f(w)\,dw-\dfrac{1}{\Gamma(\alpha+1)}\int_{t_0}^{t_0^*}(t_0^*-w)^{\alpha}f(w)\,dw.$$

Hence, if $F$ is any bounded subset of $L^p(t_0,t_1;X)$, we know that
$$\Big[J_{t_0^*,t_1^*}\circ J_{t_0,t}^\alpha \Big](F)=\left\{J_{t_0,t_1^*}^{1+\alpha} f(t_1^*)-J_{t_0,t_0^*}^{1+\alpha} f(t_0^*)\,:\,f\in F\right\}$$
is a relatively compact set in $X$, as we wanted.
\end{proof1}

Finally, let us prove our first compactness theorem.

\begin{theorem}\label{compactrieman00} Let $p\in[1,\infty)$ and $\alpha\in(0,1/p)$. If $q\in\big(p,p/(1-p\alpha)\big)$, the operator
$$J_{t_0,t}^\alpha:L^p(t_0,t_1;X)\rightarrow L^{q}(t_0,t_1;X)$$
is compact if, and only if, for any bounded set $F\subset L^p(t_0,t_1;X)$ it holds that
\begin{equation}\label{novahip00}\left\{\begin{array}{l}\left\{J_{t_0,t_1^*}^{1+\alpha} f(t_1^*)-J_{t_0,t_0^*}^{1+\alpha} f(t_0^*)\,:\,f\in F\right\}\textrm{ is relatively compact in } X,\vspace*{0.2cm}\\
\textrm{for every }t_0<t_0^*<t_1^*<t_1.\end{array}\right.\end{equation}
\end{theorem}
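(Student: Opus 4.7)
The ``only if'' direction is exactly the content of Lemma~\ref{compactlemma}, so we focus on the ``if'' direction. My strategy is to piggyback on the already-established case $q=p$ of Theorem~\ref{compactrieman} and propagate compactness to $q\in(p,p/(1-p\alpha))$ via the interpolation of $L^p$-norms supplied by Theorem~\ref{interpolation}, rather than run a Simon-type argument from scratch.

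Assume that hypothesis~\eqref{novahip00} holds. Two facts set up the interpolation. First, \eqref{novahip00} is exactly the characterization appearing in Theorem~\ref{compactrieman}, so $J_{t_0,t}^\alpha$ is already compact as an operator from $L^p(t_0,t_1;X)$ into itself. Second, there exists some $p_2>q$ for which $J_{t_0,t}^\alpha:L^p(t_0,t_1;X)\to L^{p_2}(t_0,t_1;X)$ is bounded: when $p>1$ we take $p_2=p/(1-p\alpha)$ and invoke Theorem~\ref{theoHLBoch}, while for $p=1$ the hypothesis $q<1/(1-\alpha)$ allows us to pick any $p_2\in(q,1/(1-\alpha))$ and invoke Theorem~\ref{critcasep1}.

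Given a bounded sequence $\{f_n\}\subset L^p(t_0,t_1;X)$, the first fact extracts a subsequence $\{f_{n_k}\}$ so that $\phi_k:=J_{t_0,t}^\alpha f_{n_k}$ is Cauchy in $L^p(t_0,t_1;X)$, while the second keeps $\{\phi_k\}$ uniformly bounded in $L^{p_2}(t_0,t_1;X)$. Applying Theorem~\ref{interpolation} to the nonnegative scalar function $\|\phi_k(\cdot)-\phi_l(\cdot)\|_X$ at the intermediate exponent $q$, with $\theta\in(0,1)$ determined by $1/q=(1-\theta)/p+\theta/p_2$, yields
\begin{equation*}
\|\phi_k-\phi_l\|_{L^q(t_0,t_1;X)}\leq\|\phi_k-\phi_l\|_{L^p(t_0,t_1;X)}^{1-\theta}\,\|\phi_k-\phi_l\|_{L^{p_2}(t_0,t_1;X)}^{\theta}.
\end{equation*}
The first factor tends to $0$ and the second stays bounded as $k,l\to\infty$, so $\{\phi_k\}$ is Cauchy, hence convergent, in $L^q(t_0,t_1;X)$. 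This yields the desired compactness.

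The only delicate point is the boundary case $p=1$: Theorem~\ref{theoHLBoch} excludes this value, so one must instead use Theorem~\ref{critcasep1} to secure a bounded operator $J_{t_0,t}^\alpha:L^1\to L^{p_2}$ for some $p_2\in(q,1/(1-\alpha))$, which the hypothesis $q<p/(1-p\alpha)$ precisely makes possible. On this route the other tools gathered in this section (Diethelm's mean value theorem, Corollary~\ref{meancapu}, Simon's Theorem~\ref{simon} and Lemma~\ref{minkowski2}) are not invoked; presumably they are earmarked for the companion compactness statement appearing next in the paper, where direct estimates beyond the interpolation range are genuinely required.
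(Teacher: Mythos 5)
Your proof is correct, but it follows a genuinely different route from the paper. You settle the ``only if'' direction with Lemma~\ref{compactlemma} (exactly as the paper does) and then, for the converse, you observe that hypothesis~\eqref{novahip00} is precisely the characterization in Theorem~\ref{compactrieman}, so $J_{t_0,t}^\alpha$ is already compact from $L^p(t_0,t_1;X)$ into itself; combining this with boundedness into $L^{p_2}(t_0,t_1;X)$ for some $p_2>q$ (Theorem~\ref{theoHLBoch} when $p>1$, Theorem~\ref{critcasep1} when $p=1$ --- a case distinction you handle correctly) and the interpolation inequality of Theorem~\ref{interpolation} applied to $\|\phi_k-\phi_l\|_X$, you upgrade an $L^p$-Cauchy subsequence with an $L^{p_2}$-bound to an $L^q$-Cauchy subsequence. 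This ``compact into $L^p$ plus bounded into $L^{p_2}$ implies compact into intermediate $L^q$'' argument is sound and is essentially the same composition-free trick the paper only uses later, in Theorem~\ref{compactrieman01}, for the easy range $q<p$. The paper instead proves the converse directly by verifying Simon's criterion (Theorem~\ref{simon}): item $(i)$ from~\eqref{novahip00} via Fubini, and item $(ii)$ through the splitting $\mathcal{I}_h+\mathcal{J}_h$, which requires the fractional mean value theorem (Corollary~\ref{meancapu}), Lemma~\ref{minkowski2}, Theorem~\ref{interpolation} and the Hardy--Littlewood bound of Theorem~\ref{theoHLBoch}. What your approach buys is brevity and economy: it leans on the full strength of the ``if'' direction of the prior characterization ([CarFe0, Theorem 19], quoted here as Theorem~\ref{compactrieman}) and avoids all translation estimates. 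What the paper's approach buys is a proof that is self-contained at the level of Simon's criterion and yields explicit quantitative equicontinuity rates (the factors $h^{\beta}$ and $h^{(\alpha-1)+(1/k)}$), at the price of the more intricate machinery you correctly note is not needed on your route.
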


\begin{proof}
If we assume that $J_{t_0,t}^\alpha:L^p(t_0,t_1;X)\rightarrow L^{q}(t_0,t_1;X)$ is a compact operator and $F\subset L^p(t_0,t_1;X)$ is a bounded subset, then the desired conclusion follows directly from Lemma \ref{compactlemma}.

Conversely, let $F\subset L^p(t_0,t_1;X)$ be a bounded set. In order to prove that $J_{t_0,t}^\alpha (F)$ is relatively compact in $L^q(t_0,t_1;X)$, we need to verify that this set satisfies the conditions given in items $(i)$ and $(ii)$ of Theorem \ref{simon}.  Since
$$\left\{\int_{t_1^*}^{t_0^*}{J_{t_0,s}^\alpha f(s)}\,ds\,:\,f\in F\right\}=\left\{J_{t_0,t_1^*}^{1+\alpha} f(t_1^*)-J_{t_0,t_0^*}^{1+\alpha} f(t_0^*)\,:\,f\in F\right\},$$
item $(i)$ is a direct consequence of \eqref{novahip00}. Note that the last equality is a consequence of Fubini's Theorem (see \cite[Chapter X - Theorem $2$]{Mik1}), as justified in the proof of Lemma \ref{compactlemma}. Therefore, to complete the proof of this result we just need to verify that $J_{t_0,t}^\alpha (F)$ satisfies item $(ii)$ of Theorem \ref{simon}.

To this end, observe that for each $h>0$ (sufficiently small) and $s\in[t_0,t_1-h]$
\begin{multline*}\Gamma(\alpha)\big\|J_{t_0,s+h}^\alpha f(s+h)-J_{t_0,s}^\alpha f(s)\big\|_X\leq\int_{t_0}^s\big|(s-w)^{\alpha-1}-(s+h-w)^{\alpha-1}\big|\|f(w)\|_X\,dw\\
+\int_{s}^{s+h}(s+h-w)^{\alpha-1}\|f(w)\|_X\,dw.\end{multline*}

Therefore, Minkowski's inequality ensures that
\begin{multline*}\Gamma(\alpha)\left(\int_{t_0}^{t_1-h}\big\|J_{t_0,s+h}^\alpha f(s+h)-J_{t_0,s}^\alpha f(s)\big\|_X^q\,ds\right)^{1/q}\\\leq \left(\int_{t_0}^{t_1-h}\left[\int_{t_0}^s\big|(s-w)^{\alpha-1}-(s+h-w)^{\alpha-1}\big|\|f(w)\|_X\,dw\right]^q\,ds\right)^{1/q}\\
 +\left(\int_{t_0}^{t_1-h}\left[\int_{s}^{s+h}(s+h-w)^{\alpha-1}\|f(w)\|_X\,dw\right]^q\,ds\right)^{1/q}=:\big(\mathcal{I}_h+\mathcal{J}_h\big).\end{multline*}

In a first moment, let us prove that $\sup_{f\in F}\mathcal{I}_h\rightarrow0$, when $h\rightarrow0^+$. Consider $\beta\in(0,\alpha)$ given by
$$\beta=\alpha-\left[\dfrac{q-p}{qp}\right].$$
Then, if we set $x=s-w$, Corollary \ref{meancapu} ensures the existence of $\xi_{s,w}\in(0,h)$ and $c_{\alpha,\beta}>0$ such that
\begin{multline}\label{diethelm2}\big|(s-w)^{\alpha-1}-(s+h-w)^{\alpha-1}\big|\\=h^\beta (\xi_{s,w}+s-w)^{\alpha-\beta-1}c_{\alpha,\beta}\int_{(s-w)/[\xi_{s,w}+(s-w)]}^1(1-w)^{-\beta}w^{\alpha-2}dw,\end{multline}
for every $t_0<w<s<t_1-h$. Consider now function $\delta:[0,1]\rightarrow\mathbb{R}$ given by
$$\delta(t)=\left\{\begin{array}{ll}t^{\beta+1-\alpha}\displaystyle\int_{t}^1(1-w)^{-\beta}w^{\alpha-2}\,dw,&\textrm{ if }t\in(0,1),\vspace*{0.2cm}\\0,&\textrm{ if } t=0\textrm{ or }1.\end{array}\right.$$
Observe that $\delta(t)$ is continuous in $[0,1]$, since  L'Hôpital's rule ensures that
$$\lim_{t\rightarrow0^{+}}\delta(t)=\lim_{t\rightarrow0^{+}}\left(\dfrac{\displaystyle\int_{t}^1(1-w)^{-\beta}w^{\alpha-2}\,dw}{t^{\alpha-\beta-1}}\right)=\lim_{t\rightarrow0^{+}}\dfrac{-(1-t)^{-\beta}t^{\alpha-2}}{(\alpha-\beta-1)t^{\alpha-\beta-2}}=0$$
and the fact that $w^{\alpha-2}<t^{\alpha-2}$ guarantees that
$$\lim_{t\rightarrow1^{-}}\delta(t)\leq\lim_{t\rightarrow1^{-}}t^{\beta-1}\displaystyle\int_{t}^1(1-w)^{-\beta}\,dw=\lim_{t\rightarrow1^{-}}\dfrac{t^{\beta-1}(1-t)^{1-\beta}}{1-\beta}=0.$$

Thus, there exists $M_{\alpha,\beta}>0$ such that $|\delta(t)|\leq M_{\alpha,\beta}$, for every $t\in[0,1]$. With this, by considering identity \eqref{diethelm2}, we obtain
\begin{multline*}\mathcal{I}_h={h^\beta}{c_{\alpha,\beta}}\left(\int_{t_0}^{t_1-h}\left[\int_{t_0}^s \delta\left(\dfrac{s-w}{\xi_{s,w}+s-w}\right)(s-w)^{\alpha-\beta-1}\|f(w)\|_X\,dw\right]^q\,ds\right)^{1/q}\\
\leq {h^\beta}{c_{\alpha,\beta}}M_{\alpha,\beta}\big\|J^{\alpha-\beta}_{t_0,s}\|f(s)\|_X\big\|_{L^q(t_0,t_1;\mathbb{R})}.\end{multline*}

Since $q=p/[1-p(\alpha-\beta)]$ and $0<\alpha-\beta<\alpha<1/p$, Theorem \ref{theoHLBoch} ensures that
$$\mathcal{I}_h\leq {h^\beta}\underbrace{{c_{\alpha,\beta}}M_{\alpha,\beta}C_{\alpha-\beta,p}}_{K_{\alpha,\beta,p}}\|f(t)\|_{L^p(t_0,t_1;X)}\leq {h^\beta} K_{\alpha,\beta,p}\|F\|_{L^p(t_0,t_1;X)},$$
where $\|F\|_{L^p(t_0,t_1;X)}:=\sup_{f\in F}\|f\|_{L^p(t_0,t_1;X)}<\infty$. The conclusion follows now directly from the estimate
$$\lim_{h\rightarrow0^+}\left[\,\sup_{f\in F}\mathcal{I}_h\,\right]\leq \lim_{h\rightarrow0^+}{h^\beta} K_{\alpha,\beta,p}\|F\|_{L^p(t_0,t_1;X)}=0.$$

To complete the proof of this theorem, let us verify that $\sup_{f\in F}\mathcal{J}_h\rightarrow0$, when $h\rightarrow0^+$. Choose $k\in\big(1,\min\{{p,1/(1-\alpha)}\}\big)$ and $\theta\in(0,1)$ given by
$$\theta=\dfrac{p}{q}\left[\dfrac{q-k}{p-k(1-p\alpha)}\right].$$
Since $1<k<q<p/(1-p\alpha)<\infty$, Theorem \ref{interpolation} gives us
\begin{multline*}\mathcal{J}_h\leq\left(\int_{t_0}^{t_1-h}\left[\int_{s}^{s+h}(s+h-w)^{\alpha-1}\|f(w)\|_X\,dw\right]^k\,ds\right)^{(1-\theta)/k}\\
\times\left(\int_{t_0}^{t_1-h}\left[\int_{s}^{s+h}(s+h-w)^{\alpha-1}\|f(w)\|_X\,dw\right]^{ p/(1-p\alpha)}\,ds\right)^{\theta(1-p\alpha)/p}\\=:\mathcal{K}_h\times\mathcal{L}_h.\hspace*{1cm}\end{multline*}

Now observe that Lemma \ref{minkowski2} ensures that
\begin{multline*}{\mathcal{K}_h}^{1/(1-\theta)}\leq \int_{t_0}^{t_0+h}\left[\int_{t_0}^{w}(s+h-w)^{k(\alpha-1)}ds\right]^{1/k}\|f(w)\|_X\,dw\\
+\int_{t_0+h}^{t_1-h}\left[\int_{w-h}^{w}(s+h-w)^{k(\alpha-1)}ds\right]^{1/k}\|f(w)\|_X\,dw\\
+\int_{t_1-h}^{t_1}\left[\int_{w-h}^{t_1-h}(s+h-w)^{k(\alpha-1)}ds\right]^{1/k}\|f(w)\|_X\,dw,\end{multline*}
and since $k(\alpha-1)+1>0$, we obtain
\begin{multline*}{\mathcal{K}_h}^{1/(1-\theta)}\leq \int_{t_0}^{t_0+h}\left[\dfrac{h^{k(\alpha-1)+1}-(t_0+h-w)^{k(\alpha-1)+1}}{k(\alpha-1)+1}\right]^{1/k}\|f(w)\|_X\,dw\\
+\int_{t_0+h}^{t_1-h}\left[\dfrac{h^{k(\alpha-1)+1}}{k(\alpha-1)+1}\right]^{1/k}\|f(w)\|_X\,dw\\
+\int_{t_1-h}^{t_1}\left[\dfrac{(t_1-w)^{k(\alpha-1)+1}}{k(\alpha-1)+1}\right]^{1/k}\|f(w)\|_X\,dw.\end{multline*}

But then we have
$${\mathcal{K}_h}^{1/(1-\theta)}\leq \left(\dfrac{h^{(\alpha-1)+(1/k)}}{[k(\alpha-1)+1]^{1/k}}\right)\int_{t_0}^{t_1}\|f(w)\|_X\,dw,$$
what, by using H\"{o}lder's inequality, gives us
$${\mathcal{K}_h}^{1/(1-\theta)}\leq
\left(\dfrac{h^{(\alpha-1)+(1/k)}(t_1-t_0)^{1-(1/p)}}{[k(\alpha-1)+1]^{1/k}}\right)\|f\|_{L^p(t_0,t_1;X)}.$$

Thus, we deduce that
$${\mathcal{K}_h}\leq \left(\dfrac{h^{(\alpha-1)+(1/k)}(t_1-t_0)^{1-(1/p)}}{[k(\alpha-1)+1]^{1/k}}\right)^{1-\theta}\|F\|^{1-\theta}_{L^p(t_0,t_1;X)},$$
where, like before, $\|F\|_{L^p(t_0,t_1;X)}:=\sup_{f\in F}\|f\|_{L^p(t_0,t_1;X)}<\infty$.

On the other hand,
\begin{multline*}\mathcal{L}_h= \left(\int_{t_0}^{t_1-h}\left[\int_{t_0}^{s+h}(s+h-w)^{\alpha-1}\|f(w)\|_X\,dw\right]^{ p/(1-p\alpha)}\,ds\right)^{\theta(1-p\alpha)/p}\\
\stackrel[]{s=r-h}{=}\left(\int_{t_0+h}^{t_1}\left[\int_{t_0}^{r}(r-w)^{\alpha-1}\|f(w)\|_X\,dw\right]^{ p/(1-p\alpha)}\,dr\right)^{\theta(1-p\alpha)/p},\end{multline*}
what implies
$$\mathcal{L}_h= \big[\Gamma(\alpha)\big]^\theta\|J^\alpha_{t_0,r}\|f\|_X\|^\theta_{L^{p/(1-p\alpha)}(t_0+h,t_1;\mathbb{R})}\leq \big[\Gamma(\alpha)\big]^\theta\|J^\alpha_{t_0,r}\|f\|_X\|^\theta_{L^{p/(1-p\alpha)}(t_0,t_1;\mathbb{R})}.$$

Thus Theorem \ref{theoHLBoch} ensures that
$$\mathcal{L}_h\leq \big[\Gamma(\alpha)C_{\alpha,p}\big]^\theta\|f\|^\theta_{L^{p}(t_0,t_1;X)}\leq \big[\Gamma(\alpha)C_{\alpha,p}\big]^\theta\|F\|^\theta_{L^p(t_0,t_1;X)},$$
again, like before, $\|F\|_{L^p(t_0,t_1;X)}:=\sup_{f\in F}\|f\|_{L^p(t_0,t_1;X)}<\infty$.

The above computations allow us to deduce
$$\sup_{f\in F}\mathcal{J}_h\leq  \left\{\dfrac{h^{(\alpha-1)+(1/k)}(t_1-t_0)^{1-(1/p)}}{[k(\alpha-1)+1]^{1/k}}\right\}^{1-\theta}\big[\Gamma(\alpha)C_{\alpha,p}\big]^\theta\|F\|_{L^p(t_0,t_1;X)},$$
what implies that $\sup_{f\in F}\mathcal{J}_h\rightarrow0$, when $h\rightarrow0^+$.
\end{proof}

Recall now that the case $q=p$ was already discussed in a previous work; see Theorem \ref{compactrieman}. Hence, in what follows we present our next result which improves the conclusions of Theorems \ref{compactrieman} and \ref{compactrieman00} a little bit more.

\begin{theorem}\label{compactrieman01} Let $p\in[1,\infty)$ and $\alpha>0$. If $q\in\big[1,p)$, the operator
$$J_{t_0,t}^\alpha:L^p(t_0,t_1;X)\rightarrow L^{q}(t_0,t_1;X)$$
is compact if, and only if, for any bounded set $F\subset L^p(t_0,t_1;X)$ it holds that
\begin{equation*}\left|\begin{array}{l}\left\{J_{t_0,t_1^*}^{1+\alpha} f(t_1^*)-J_{t_0,t_0^*}^{1+\alpha} f(t_0^*)\,:\,f\in F\right\}\textrm{ is relatively compact in } X,\vspace*{0.2cm}\\
\textrm{for every }t_0<t_0^*<t_1^*<t_1.\end{array}\right.\end{equation*}
\end{theorem}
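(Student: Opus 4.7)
The plan is to reduce Theorem \ref{compactrieman01} directly to Theorem \ref{compactrieman} (the $q=p$ case) via the trivial continuous embedding $L^p(t_0,t_1;X)\hookrightarrow L^q(t_0,t_1;X)$ valid whenever $q<p$ on a bounded interval; no new analytical work is needed, in sharp contrast to Theorem \ref{compactrieman00} which required interpolation and the delicate splitting argument built around Lemma \ref{minkowski2}.

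For the ``only if'' direction, note that the hypothesis on $q\in[1,p)$ places us exactly in the setting of Lemma \ref{compactlemma}, so the relative compactness of the sets $\{J_{t_0,t_1^*}^{1+\alpha}f(t_1^*)-J_{t_0,t_0^*}^{1+\alpha}f(t_0^*):f\in F\}$ in $X$ is an immediate consequence of that lemma applied to the compact operator $J_{t_0,t}^\alpha:L^p(t_0,t_1;X)\to L^q(t_0,t_1;X)$.

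For the ``if'' direction, suppose the relative compactness condition \eqref{novahip00} holds for every bounded $F\subset L^p(t_0,t_1;X)$. Then Theorem \ref{compactrieman} applied with the exponent $p$ yields that $J_{t_0,t}^\alpha:L^p(t_0,t_1;X)\to L^p(t_0,t_1;X)$ is compact. Since $1\leq q<p<\infty$ and $(t_1-t_0)<\infty$, H\"older's inequality gives the bounded embedding
\begin{equation*}
E:L^p(t_0,t_1;X)\to L^q(t_0,t_1;X),\qquad \|Ef\|_{L^q(t_0,t_1;X)}\leq (t_1-t_0)^{(1/q)-(1/p)}\|f\|_{L^p(t_0,t_1;X)}.
\end{equation*}
The operator $J_{t_0,t}^\alpha:L^p(t_0,t_1;X)\to L^q(t_0,t_1;X)$ factors as $E\circ J_{t_0,t}^\alpha$, where $J_{t_0,t}^\alpha$ on the right is regarded as acting from $L^p$ into $L^p$. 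The composition of a compact operator with a bounded linear operator is compact, which gives the desired conclusion.

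There is essentially no obstacle: the proof is a structural reduction. The only point that deserves attention is recognizing that the characterizing condition in Theorem \ref{compactrieman01} is literally the same as the one in Theorem \ref{compactrieman}, so one need only pair the $L^p\to L^p$ compactness result with the elementary inclusion of Bochner-Lebesgue spaces on a bounded interval.
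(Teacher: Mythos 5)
Your proposal is correct and follows essentially the same route as the paper: the ``only if'' part from Lemma \ref{compactlemma}, and the ``if'' part by invoking Theorem \ref{compactrieman} for the $L^p\to L^p$ case and composing with the continuous embedding $E:L^p(t_0,t_1;X)\to L^q(t_0,t_1;X)$. No gaps.
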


\begin{proof} The first part follows from Lemma \ref{compactlemma}.

Conversely, since we already know that $J^\alpha_{t_0,t}: L^p(t_0,t_1;X)\rightarrow L^p(t_0,t_1;X)$ is compact (see Theorem \ref{compactrieman}) and that the embedding operator
$$\begin{array}{lclc}E:&L^p(t_0,t_1;X)&\rightarrow& L^q(t_0,t_1;X),\vspace*{0.0cm}\\
&f(t)&\mapsto& f(t),\end{array}$$
is continuous, we conclude that $E\circ J^\alpha_{t_0,t}:L^p(t_0,t_1;X)\rightarrow L^q(t_0,t_1;X)$ is a compact operator, as we wanted.
\end{proof}

We can summarize all the above compactness results in the following theorem.

\begin{theorem}\label{compactriemanfinal} Assume that one of the following assertions hold:
\begin{itemize}
\item[(i)] $p\in[1,\infty)$, $\alpha\in(0,\infty)$ and $q\in[1,p]$;\vspace*{0.2cm}
\item[(ii)] $p\in(1,\infty)$, $\alpha\in(0,1/p)$ and $q\in\big(p,p/(1-p\alpha)\big)$;\vspace*{0.2cm}
\item[(iii)] $p=1$, $\alpha\in(0,1)$ and $q\in\big[1,1/(1-\alpha)\big)$. \vspace*{0.2cm}
\end{itemize}
Then $J_{t_0,t}^\alpha:L^p(t_0,t_1;X)\rightarrow L^{q}(t_0,t_1;X)$ is a compact operator if, and only if, for any bounded set $F\subset L^p(t_0,t_1;X)$ it holds that
\begin{equation*}\left|\begin{array}{l}\left\{J_{t_0,t_1^*}^{1+\alpha} f(t_1^*)-J_{t_0,t_0^*}^{1+\alpha} f(t_0^*)\,:\,f\in F\right\}\textrm{ is relatively compact in } X,\vspace*{0.2cm}\\
\textrm{for every }t_0<t_0^*<t_1^*<t_1.\end{array}\right.\end{equation*}

\end{theorem}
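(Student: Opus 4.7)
The plan is to recognize that Theorem \ref{compactriemanfinal} is essentially a consolidation of Theorems \ref{compactrieman}, \ref{compactrieman00}, and \ref{compactrieman01}, all already established. Thus the proof reduces to verifying that the three parameter regimes (i), (ii), (iii) in the statement are precisely covered by these earlier results, and to invoking Lemma \ref{compactlemma} for the unified ``only if'' direction.

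For the necessity (``only if'') part, I would give a single argument valid in all three cases: since $p, q \in [1,\infty)$ and $\alpha > 0$ in every regime, Lemma \ref{compactlemma} directly applies to the compact operator $J_{t_0,t}^\alpha: L^p(t_0,t_1;X) \to L^q(t_0,t_1;X)$ and yields the relative compactness of $\{J_{t_0,t_1^*}^{1+\alpha} f(t_1^*) - J_{t_0,t_0^*}^{1+\alpha} f(t_0^*) : f \in F\}$ in $X$ for any bounded $F \subset L^p(t_0,t_1;X)$ and any $t_0 < t_0^* < t_1^* < t_1$.

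For the sufficiency (``if'') direction, I would dispatch case by case. In case (i), if $q = p$ the conclusion is exactly Theorem \ref{compactrieman}, while if $q \in [1,p)$ it follows from Theorem \ref{compactrieman01}. In case (ii) the conclusion is precisely Theorem \ref{compactrieman00} (whose hypotheses $p \in [1,\infty)$, $\alpha \in (0,1/p)$, $q \in (p, p/(1-p\alpha))$ contain the sub-regime $p > 1$). In case (iii), where $p = 1$ and $\alpha \in (0,1)$, the range $q \in [1, 1/(1-\alpha))$ splits into $q = 1 = p$, handled by Theorem \ref{compactrieman}, and $q \in (1, 1/(1-\alpha))$, which coincides with the interval $(p, p/(1-p\alpha))$ when $p = 1$, and is therefore covered by Theorem \ref{compactrieman00}.

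There is no genuine mathematical obstacle here: all the analytic work (the Hardy--Littlewood-type bound, the Simon compactness criterion, the interpolation estimates, and the mean-value argument via Corollary \ref{meancapu}) has already been carried out in the preceding theorems. The only thing to be careful about is a bookkeeping check that the parameter ranges in (i)--(iii) together with Lemma \ref{compactlemma} precisely partition and match the earlier results; in particular, one should explicitly note that case (iii) with $q = 1$ falls under Theorem \ref{compactrieman} rather than Theorem \ref{compactrieman00}, so that no parameter combination is omitted.
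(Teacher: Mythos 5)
Your proposal is correct and takes essentially the same route as the paper, which presents Theorem \ref{compactriemanfinal} precisely as a consolidation of Theorem \ref{compactrieman} (the case $q=p$), Theorem \ref{compactrieman01} (the case $q<p$), and Theorem \ref{compactrieman00} (the case $p<q<p/(1-p\alpha)$, stated for $p\in[1,\infty)$ and hence covering case (iii)), with the necessity direction supplied uniformly by Lemma \ref{compactlemma}. Your bookkeeping of the parameter regimes, including assigning $q=1=p$ in case (iii) to Theorem \ref{compactrieman}, is exactly the intended matching.
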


A natural corollary of the above result to the finite dimensional case is the following:

\begin{corollary} Assume that $p$, $\alpha$ and $q$ satisfies $(i)$, $(ii)$ or $(iii)$ of Theorem \ref{compactriemanfinal}. If $X$ is a finite dimensional Banach space, then $J_{t_0,t}^\alpha:L^p(t_0,t_1;X)\rightarrow L^{q}(t_0,t_1;X)$ is a compact operator.
\end{corollary}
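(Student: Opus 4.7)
The plan is to invoke Theorem \ref{compactriemanfinal} directly and reduce the task to verifying its hypothesis, which trivializes in finite dimensions. More precisely, by that theorem the compactness of $J_{t_0,t}^\alpha:L^p(t_0,t_1;X)\rightarrow L^q(t_0,t_1;X)$ is equivalent to the relative compactness in $X$ of the set
\[\left\{J_{t_0,t_1^*}^{1+\alpha} f(t_1^*)-J_{t_0,t_0^*}^{1+\alpha} f(t_0^*)\,:\,f\in F\right\}\]
for every bounded $F\subset L^p(t_0,t_1;X)$ and every $t_0<t_0^*<t_1^*<t_1$. Since in a finite dimensional Banach space every bounded subset is relatively compact (by the Heine-Borel property), it suffices to show that the above set is bounded in $X$.

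To establish boundedness, I would estimate $\|J_{t_0,t^*}^{1+\alpha} f(t^*)\|_X$ for $t^*\in\{t_0^*,t_1^*\}$ using the integral representation
\[J_{t_0,t^*}^{1+\alpha} f(t^*) = \frac{1}{\Gamma(1+\alpha)}\int_{t_0}^{t^*}(t^*-s)^{\alpha} f(s)\,ds,\]
together with H\"{o}lder's inequality (the usual one when $p>1$, or the trivial $L^\infty$-against-$L^1$ bound when $p=1$), to obtain
\[\|J_{t_0,t^*}^{1+\alpha} f(t^*)\|_X \leq \frac{(t^*-t_0)^{\alpha+1-(1/p)}}{\Gamma(1+\alpha)\big[\alpha p/(p-1)+1\big]^{(p-1)/p}}\,\|f\|_{L^p(t_0,t_1;X)}\]
(with the obvious modification in the case $p=1$). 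The triangle inequality then yields a bound on $\|J_{t_0,t_1^*}^{1+\alpha} f(t_1^*)-J_{t_0,t_0^*}^{1+\alpha} f(t_0^*)\|_X$ that depends only on $\alpha$, $p$, $t_0$, $t_1^*$ and $\sup_{f\in F}\|f\|_{L^p(t_0,t_1;X)}$, which is finite by hypothesis on $F$.

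There is no genuine obstacle here: the content is entirely packaged in Theorem \ref{compactriemanfinal} and the Heine-Borel theorem. The only point to be mindful of is to quote the correct one of the three regimes $(i)$, $(ii)$, $(iii)$ when invoking Theorem \ref{compactriemanfinal}, but this is automatic from the hypothesis of the corollary. Hence the proof reduces to the two-line argument: bound the set in $X$ via H\"{o}lder, then use finite dimensionality.
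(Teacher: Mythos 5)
Your argument is correct and follows essentially the same route as the paper: bound the set $\{J_{t_0,t_1^*}^{1+\alpha} f(t_1^*)-J_{t_0,t_0^*}^{1+\alpha} f(t_0^*):f\in F\}$ in $X$ via H\"{o}lder's inequality, use finite dimensionality to get relative compactness, and then invoke Theorem \ref{compactriemanfinal}. Your explicit constant agrees with the paper's (up to the harmless $\Gamma(1+\alpha)$ factor), so there is nothing to add.
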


\begin{proof} Just observe that for $f\in L^p(t_0,t_1;X)$ and $t^*\in(t_0,t_1)$, H\"{o}lder's inequality ensures that
\begin{multline*}\left\|\int_{t_0}^{t^*}(t^*-s)^{\alpha}f(s)\,ds\right\|_X\leq
\left[\int_{t_0}^{t^*}(t^*-s)^{(\alpha p)/(p-1)}\,ds\right]^{(p-1)/p}\|f\|_{L^p(t_0,t_1;X)}\\=\left[\dfrac{p-1}{(\alpha+1)p-1}\right]^{(p-1)/p}(t^*-t_0)^{\alpha+1-(1/p)}\|f\|_{L^p(t_0,t_1;X)}.\end{multline*}

The above estimate allows us to conclude that for any $F\subset L^p(t_0,t_1;X)$ bounded, the set $\{J_{t_0,t_1^*}^{1+\alpha} f(t_1^*)-J_{t_0,t_0^*}^{1+\alpha} f(t_0^*)\,:\,f\in F\}$ is also bounded in $X$. Since the dimension of $X$ is finite, then we deduce that %
$$\{J_{t_0,t_1^*}^{1+\alpha} f(t_1^*)-J_{t_0,t_0^*}^{1+\alpha} f(t_0^*)\,:\,f\in F\}$$
is relatively compact in $X$. Thus, Theorem \ref{compactriemanfinal} gives us the compactness of the RL fractional integral operator from $L^p(t_0,t_1;X)$ into $L^q(t_0,t_1;X)$.
\end{proof}

\end{document}